\newtheorem{thm}{Theorem}[section]
\newtheorem{lm}[thm]{Lemma}
\theoremstyle{definition}
\theoremstyle{remark}
\numberwithin{equation}{section}
\newcommand{\f}{\varphi}
\newcommand{\la}{\lambda}
\newcommand{\al}{\alpha}
\newcommand{\cz}{Calder\'{o}n--Zygmund\ }
\newcommand{\B}{\mathbb{B}}
\newcommand{\bb}{B^{\epsilon_0}}
\newcommand{\e}{\varepsilon}
\newcommand{\s}{\sigma}
\newcommand{\Om}{\Omega}
\newcommand{\om}{\omega}
\newcommand{\E}{\mathbb{E}}
\newcommand{\F}{{\mathcal F}}
\newcommand{\R}{\mathbb{R}}
\newcommand{\FF}{\bf{F}}
\newcommand{\ff}{{\bf f}}
\newcommand{\g}{{\bf g}}
\newcommand{\WW}{\bf{w}}
\newcommand{\eps}{\epsilon}
\newcommand{\PP}{\mathcal{P}}
\newcommand{\wt}{\widetilde}
\newcommand{\La}{\langle}
\newcommand{\Ra}{\rangle}
\def\cyr{\fontencoding{OT2}\fontfamily{wncyr}\selectfont}
\DeclareTextFontCommand{\textcyr}{\cyr}
\newcounter{vremennyj}
\begin{document}

\title[$A_1$ conjecture]{A Bellman function counterexample to the $A_1$ conjecture:  the blow-up of the weak norm  estimates of weighted singular operators}
\author{Fedor Nazarov}
\author{Alexander Reznikov}
\author{Vasily Vasyunin}
\author{Alexander Volberg}
\subjclass{30E20, 47B37, 47B40, 30D55.}
\keywords{Key words: \cz operators, $A_2$ weights, $A_1$ weights, Carleson embedding theorem, Corona decomposition, stopping time,
   nonhomogeneous Harmonic Analysis, extrapolation, weak type .}
\date{}

\begin{abstract}
We consider several weak type estimates for  singular operators using the Bellman function approach. We disprove the $A_1$ conjecture, which is a weaker conjecture than Muckenhoupt--Wheeden conjecture  disproved by Reguera--Thiele.
\end{abstract}

\maketitle

\section{Introduction}
\label{intro}

Maria Reguera \cite{MR} disproved Muckenhoupt--Wheeden conjecture. Then Maria Reguera and Christoph Thiele disproved Muckenhoupt--Wheeden conjecture \cite{RT}, which required that the Hilbert transform would map $L^1(Mw)$ into $L^{1,\infty}(w)$.  It has been suggested in P\'erez' paper  \cite{P} that there should exist such a counterexample, also \cite{P} has several very interesting positive results, where $Mw$ is replaced by a slightly bigger maximal function, in particular by $M^2w$ (which is equivalent to a certain Orlicz maximal function). 

Here we strengthen Reguera and Reguera--Thiele results by disproving the so called $A_1$ conjecture (which also seems to be rather old and due to Muckenhoupt).
The reader can get acquainted with the best so far positive result on $A_1$ conjecture in the paper \cite{LOP}. 

The $A_1$ conjecture stated that the Hilbert transform would map $L^1(w)$ to $L^{1,\infty}(w)$ with norm bounded by constant times $[w]_{A_1}$ (the $A_1$ ``norm" of $w$). Recall that $[w]_{A_1}:=\sup\frac{Mw(x)}{w(x)}$. Therefore, $A_1$ conjecture is weaker than Muckenhoupt--Wheeden conjecture, and, hence, it is more difficult to disprove it.  And, in fact, in \cite{MR}, \cite{RT} the $A_1$ norm of the weight is uncontrolled, while we need to construct a rather ``smooth" $w$ to build our counterexample.

The $A_1$ conjecture is  also called a weak Muckenhoupt--Wheeden conjecture. We prove that the linear estimate in weak Muckenhoupt--Wheeden conjecture is impossible, and, moreover, the growth of the weak norm of the the martingale transform and the weak norm of the Hilbert transform from $L^1(w)$ into $L^{1,\infty}(w)$ is at least $c\, [w]_{A_1}\log^{\frac15 -\epsilon} [w]_{A_1}$. Paper \cite{LOP} gives an estimate from above for such a norm: it is $\le C\, [w]_{A_1}\log [w]_{A_1}$. We believe that this latter estimate might be sharp and that our estimates from below can be improved. 

The plan of the paper: first we repeat the result of \cite{RVaVo}, where the exact Bellman function for the unweighted weak estimate of the martingale transform has been constructed. Then we show the logarithmic blow-up for the weighted estimate of the martingale transform in the end-point case $w\in A_1$. Then we adapt this result to obtain the same speed of blow-up for the Hilbert transform.

\medskip


\section{Unweighted weak type of $0$ shift}
\label{unweighted}

Here we review the work \cite{RVaVo}, where the Bellman function and the extremizers were constructed for the unweighted martingale transform.
The unweighted problem is much easier than the weighted problem that we consider in the current article. However, a glance at a simpler problem helps us to set up a more difficult one and to understand the difficulties. So we start with unweighted martingale transform, and briefly recall the reader the set up and some of the results of \cite{RVaVo}.

\bigskip

We are on $I_0:=[0,1]$. As always $D$ denote the dyadic lattice. We consider the operator
$$
\f\rightarrow \sum_{I\subseteq I_0, I\in D} \eps_I (\f, h_I) h_I\,,
$$
where $-1\le \eps_I\le 1$. Notice that the sum does not contain the constant term.

Put
$$
F:=\langle |\varphi|\rangle_{I}\,,\,f:=\langle \varphi\rangle_{I}\,,
$$
and introduce  the following function:
$$
B(F,f, \la):= \sup\,\frac1{|I|}|\{x\in I: \sum_{J\subseteq I, J\in D} \eps_J(\f, h_J) h_J(x)>\la\}|\,,
$$
where the $sup$ is taken over all $-1\le\eps_J\le 1, J\in D,\, J\subseteq I$, and over all $\varphi\in L^1(I)$ such that $
F:=\langle |\varphi|\rangle_{I}\,,\,f:=\langle \varphi\rangle_{I}
$, $ h_I$ are normalized in $L^2(\R)$ Haar function of the  cube (interval) $I$, and $|\cdot |$ denote Lebesgue measure.
Recall that
$$
h_I(x):=\begin{cases} \frac1{\sqrt{|I|}}\,,\, x\in I_+\\ -\frac1{\sqrt{|I|}}\,,\, x\in I_-\end{cases}
$$

This function is defined in a convex domain $\Om\subset \R^3$: $\Om:=\{(F,f,\la)\in \R^3: |f| \le F\}$.

\medskip

\noindent {\bf Remark.} Function $B$ should not be indexed by $I$ because it does not depend on $I$. We will use this soon.

\subsection{The main inequality}
\label{MI}

\begin{thm}
\label{tuda}
Let $P, P_+,P_-\in \Om, P=(F,f,\la)$, $P_+=(F+\al, f+\beta, \la +\beta)$, $P_-=(F-\al, f-\beta, \la -\beta)$. Then
\begin{equation}
\label{mi1}
B(P)-\frac12(B(P_+)+B(P_-))\ge 0\,.
\end{equation}
At the same time, if $P, P_+,P_-\in \Om, P=(F,f,\la)$, $P_+=(F+\al, f+\beta, \la -\beta)$, $P_-=(F-\al, f-\beta, \la +\beta)$. Then
\begin{equation}
\label{mi2}
B(P)-\frac12(B(P_+)+B(P_-))\ge 0\,.
\end{equation}
\end{thm}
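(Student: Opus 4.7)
I will prove both \eqref{mi1} and \eqref{mi2} by the standard Bellman splitting argument: concatenate nearly extremal configurations on the two dyadic children $I_+$, $I_-$ of some reference interval $I$, then check that the concatenation is admissible for $B(P)$, with the extra top--level Haar term producing exactly the prescribed shift $\pm\beta$ in the threshold $\la$.

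Fix $\delta>0$. Using the scale invariance of $B$ stated in the Remark, choose on the two children $I_\pm$ functions $\f_\pm$ with $\langle|\f_\pm|\rangle_{I_\pm}=F\pm\al$ and $\langle\f_\pm\rangle_{I_\pm}=f\pm\beta$, together with coefficients $\{\eps_J^\pm\}_{J\in D,\,J\subseteq I_\pm}$, $|\eps_J^\pm|\le 1$, such that the tail operators $T_\pm\f_\pm := \sum_{J\subseteq I_\pm}\eps_J^\pm(\f_\pm,h_J)h_J$ satisfy
\[
\tfrac{1}{|I_\pm|}\bigl|\{x\in I_\pm : T_\pm\f_\pm(x) > \la\pm\beta\}\bigr| \ge B(P_\pm)-\delta
\]
in the case of \eqref{mi1}; for \eqref{mi2} replace the thresholds $\la\pm\beta$ by $\la\mp\beta$. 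This is possible because $P_\pm\in\Om$ by assumption, so the corresponding sups are taken over non-empty families.

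Now glue: set $\f:=\f_+\1_{I_+}+\f_-\1_{I_-}$ on $I=I_+\cup I_-$. A direct computation gives $\langle\f\rangle_I=\tfrac12((f+\beta)+(f-\beta))=f$, $\langle|\f|\rangle_I=F$, and $(\f,h_I)=\sqrt{|I|}\,\beta$, so the top--level Haar term $\eps_I(\f,h_I)h_I(x)$ equals $\eps_I\beta$ on $I_+$ and $-\eps_I\beta$ on $I_-$. Keep $\eps_J=\eps_J^\pm$ for $J\subsetneq I$, and take $\eps_I=-1$ for \eqref{mi1}, $\eps_I=+1$ for \eqref{mi2}. Then on $I_\pm$ the event $\sum_{J\subseteq I}\eps_J(\f,h_J)h_J(x)>\la$ is precisely $T_\pm\f_\pm(x)>\la\pm\beta$ in the first case, and $T_\pm\f_\pm(x)>\la\mp\beta$ in the second. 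Summing the measures of the two pieces, dividing by $|I|$, using $|I_\pm|=|I|/2$, and recalling that the resulting quantity is bounded above by $B(P)$ by the supremum definition, we obtain
\[
B(P)\ge\tfrac12\bigl(B(P_+)+B(P_-)\bigr)-\delta,
\]
and sending $\delta\to 0$ yields both claims.

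The only delicate point is the sign bookkeeping in the previous paragraph: the two statements \eqref{mi1} and \eqref{mi2} are distinguished solely by the sign of $\eps_I$, which in turn flips the direction of the shift in the $\la$--coordinate on $I_\pm$. Apart from that, everything is forced by the additive decomposition of the dyadic Haar expansion on $I$ into the top Haar term plus the expansions on its two children, together with the scale invariance of $B$; no convexity or regularity of $B$ is used at this stage.
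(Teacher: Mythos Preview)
Your proof is correct and follows essentially the same approach as the paper: concatenate near-extremizers $\f_\pm$ for $B(P_\pm)$ on the two dyadic children, compute that the top Haar coefficient of the glued function equals $\sqrt{|I|}\,\beta$, and choose $\eps_I=-1$ (for \eqref{mi1}) or $\eps_I=+1$ (for \eqref{mi2}) so that the threshold on each child shifts by exactly $\pm\beta$. The paper's proof is identical in structure and sign bookkeeping; the only cosmetic difference is that the paper uses $\eta$ where you use $\delta$.
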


\begin{proof}
Fix $P, P_+,P_-\in \Om, P=(F,f,\la)$, $P_+=(F+\al, f+\beta, \la +\beta)$, $P_-=(F-\al, f-\beta, \la -\beta)$.
Let $\varphi_+, \varphi_-$ be functions giving the supremum in $B(P_+), B(P_-)$ respectively up to a small number $\eta>0$.
Using the remark above we think that $\varphi_+$ is on $I_+$ and $\varphi_-$ is on $I_-$. Consider
$$
\f(x):=\begin{cases} \varphi_+(x)\,,\, x\in I_+\\ \varphi_-(x)\,,\, x\in I_-\end{cases}
$$
Notice that then
\begin{equation}
\label{beta1}
(\f, h_I)\cdot\frac1{\sqrt{|I|}} = \beta\,.
\end{equation}
Then it is easy to see that
\begin{equation}
\label{av1}
\langle | \f| \rangle_I = F=P_1, \,\,\,\langle \f \rangle_I =f=P_2\,.
\end{equation}
Notice that for $x\in I_+$ using \eqref{beta1}, we get if $\eps_I =-1$
$$
\frac1{|I|}|\{x\in I_+: \sum_{J\subseteq I, J\in D} \eps_J(\f, h_J) h_J(x)>\la\}|= \frac1{|I|}|\{x\in I_+: \sum_{J\subseteq I_+, J\in D} \eps_J(\f, h_J) h_J(x)>\la+\beta\}|
$$
$$
=\frac1{2|I_+|}|\{x\in I_+: \sum_{J\subseteq I_+, J\in D} \eps_J(\varphi_+, h_J) h_J(x)>P_{+,3}\}|\ge \frac12 B(P_+)-\eta\,.
$$
Similarly, for $x\in I_-$ using \eqref{beta1}, we get if $\eps_I =-1$
$$
\frac1{|I|}|\{x\in I_-: \sum_{J\subseteq I, J\in D} \eps_J(\f, h_J) h_J(x)>\la\}|= \frac1{|I|}|\{x\in I_-: \sum_{J\subseteq I_+, J\in D} \eps_J(\f, h_J) h_J(x)>\la-\beta\}|
$$
$$
=\frac1{2|I_-|}|\{x\in I_-: \sum_{J\subseteq I_-, J\in D} \eps_J(\varphi_-, h_J) h_J(x)>P_{-,3}\}|\ge \frac12 B(P_-)-\eta\,.
$$
Combining the two left hand sides we obtain for $\eps_I=-1$
$$
\frac1{|I|}|\{x\in I_+: \sum_{J\subseteq I, J\in D} \eps_J(\f, h_J) h_J(x)>\la\}|\ge \frac 12 (B(P_+)+B(P_-))-2\eta\,.
$$
Let us use now the simple information \eqref{av1}: if we take the supremum in the left hand side over all functions $\varphi$, such that $\langle |\varphi | \rangle_I =F, \langle \varphi \rangle_I =f $, and supremum over all $\eps_J\in [-1,1]$ (only $\eps_I=-1$ stays fixed), we get a quantity smaller or equal than the one, where we have the supremum over all functions $\varphi$, such that $\langle |\varphi | \rangle =F, \langle \varphi \rangle_I =f $, and an unrestricted  supremum over all $\eps_J\in [- 1,1]$. The latter quantity is of course $B(F, f, \la)$. So we proved \eqref{mi1}.

To prove \eqref{mi2} we repeat verbatim the same reasoning, only keeping now $\eps_I=1$. We are done.

\end{proof}

Denote
$$
T\f:=\sum_{J\subseteq I, J\in D} \eps_J(\f, h_J) h_J(x)\,.
$$
It is a dyadic singular operator (actually, it is a family of operators enumerated by sequences of $\eps_I\in [-1, 1]$). To prove that it is of weak type is the same as to prove
\begin{equation}
\label{wt1}
B(F,f,\la)\le \frac{C\,F}{\la}, \,\la >0\,.
\end{equation}
Our $B$ satisfies \eqref{mi1}, \eqref{mi2}. We consider this as concavity conditions.

Let us make the change of variables,
$(F,f, \la)\rightarrow (F, y_1, y_2)$:
$$
y_1:=\frac12 (\la+ f)\,,\,\, y_2:=\frac12 (\la-f)\,.
$$
Denote
$$
M(F, y_1, y_2) := B(F, y_1-y_2, y_1+y_2)= B(F, f, \la).
$$

In terms of function $M$ Theorem \ref{tuda} reads as follows:
\begin{thm}
\label{tudaM}
The function $M$ is defined in the domain $G:=\{(F,y_1,y_2): |y_1 -y_2|\le F\}$,  and
for each fixed $y_2$, $M(F, y_1, \cdot)$ is concave and for each fixed $y_1$, $M(F, \cdot, y_2)$ is concave.
\end{thm}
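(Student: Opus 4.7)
The plan is a direct change-of-variables computation. I set $y_1 = \frac{1}{2}(\la + f)$, $y_2 = \frac{1}{2}(\la - f)$, so $f = y_1 - y_2$ and $\la = y_1 + y_2$. The defining inequality $|f| \le F$ of $\Om$ becomes $|y_1 - y_2| \le F$, so the change of variables is a bijection $\Om \to G$; in particular $G$ is convex. The function $M(F,y_1,y_2) := B(F, y_1 - y_2, y_1 + y_2)$ is well-defined on $G$.

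Next I track how the triples in Theorem~\ref{tuda} transform. For \eqref{mi1}, the point $P_+ = (F+\al, f+\beta, \la+\beta)$ has new coordinates
$$ y_1^+ = \tfrac{1}{2}((\la+\beta)+(f+\beta)) = y_1+\beta, \qquad y_2^+ = \tfrac{1}{2}((\la+\beta)-(f+\beta)) = y_2,$$
and similarly $P_-$ corresponds to $(F-\al, y_1-\beta, y_2)$. Thus \eqref{mi1} reads
$$ M(F, y_1, y_2) \ge \tfrac{1}{2}\bigl[M(F+\al, y_1+\beta, y_2) + M(F-\al, y_1-\beta, y_2)\bigr],$$
i.e.\ midpoint concavity of $M$ in the $(F, y_1)$-plane for every fixed $y_2$. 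A symmetric computation shows that the shifts $(f,\la) \mapsto (f\pm\beta, \la\mp\beta)$ in \eqref{mi2} become $(y_1, y_2) \mapsto (y_1, y_2 \mp \beta)$, so \eqref{mi2} is midpoint concavity of $M$ in the $(F, y_2)$-plane for every fixed $y_1$. The admissibility conditions $P_\pm \in \Om$ translate into $(F\pm\al, y_1\pm\beta, y_2) \in G$ (respectively $(F\pm\al, y_1, y_2\mp\beta) \in G$), and since $G$ is convex, arbitrarily small increments $(\al,\beta)$ at any interior point are admissible.

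The remaining step is to upgrade these two families of planar midpoint-concavity inequalities to genuine planar concavity. Since $B$ is the pointwise supremum of a family of averages of measurable functions, it is Borel measurable (in fact locally bounded by $1$), hence so is $M$. The classical Sierpi\'nski-type theorem then promotes midpoint concavity of a Borel measurable function on an open convex set to concavity, applied separately to each planar restriction $(F, y_1) \mapsto M(F, y_1, y_2)$ with $y_2$ fixed, and $(F, y_2) \mapsto M(F, y_1, y_2)$ with $y_1$ fixed.

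The proof is essentially bookkeeping; the only conceptual point is to notice that the particular linear combinations of $(f, \beta)$ and $(\la, \beta)$ prescribed in \eqref{mi1} and \eqref{mi2} are exactly the ones that keep one of the new variables $y_1, y_2$ frozen. If the change of variables had mixed both Haar-type moves \eqref{mi1} and \eqref{mi2} simultaneously, one would only obtain midpoint concavity in two one-dimensional directions rather than in two full planes, and the 2D concavity claim of Theorem~\ref{tudaM} would not follow directly.
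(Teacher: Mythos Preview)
Your argument is correct and is exactly the change-of-variables reformulation the paper has in mind; the paper presents Theorem~\ref{tudaM} simply as ``Theorem~\ref{tuda} reads as follows'' without a separate proof. Your only addition is the Sierpi\'nski step upgrading midpoint concavity to full concavity, which the paper does not spell out---it uses ``concave'' and ``satisfies \eqref{mi1}, \eqref{mi2}'' interchangeably (see the sentence after Theorem~\ref{tudaM} and the parenthetical in Theorem~\ref{obratno}), relying implicitly on the boundedness $0\le B\le 1$.
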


Abusing the language we will call by the same letter $B$ (correspondingly, $M$) {\it any}  function satisfying \eqref{mi1}, \eqref{mi2} (correspondingly satisfying Theorem \ref{tudaM}).

\bigskip

It is not difficult to obtain one more condition, the so-called {\it obstacle condition}:
\begin{lm}
\label{obstLemma}
\begin{equation}
\label{obst}
\text{If}\,\, \la < F\,\,\text{then}\,\, B(F,f, \la)=1.
\end{equation}
\end{lm}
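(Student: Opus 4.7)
The upper bound $B(F,f,\la)\le 1$ is immediate from the definition of $B$ as a supremum of measure-ratios, each of which is at most $1$. My plan is to prove the reverse inequality by exhibiting, for every integer $k\ge 1$, an admissible pair $(\f,\{\eps_J\})$ for which $|\{T\f>\la\}|\ge (1-2^{-k})|I|$; sending $k\to\infty$ then forces $B(F,f,\la)=1$. The main work is entirely in the construction -- once the right $\f$ and $\{\eps_J\}$ are written down, the verification reduces to an elementary telescoping identity.

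The construction is a \emph{two-sided dyadic spike}. Choose dyadic subintervals $E_+\subset I_+$ and $E_-\subset I_-$, each of length $2^{-k-1}|I|$, and set
\[
\f \;=\; (F+f)\,2^k\,\chi_{E_+}\;-\;(F-f)\,2^k\,\chi_{E_-}.
\]
A direct computation gives $\langle|\f|\rangle_I=F$ and $\langle\f\rangle_I=f$; the hypothesis $|f|\le F$ keeps both amplitudes nonnegative. The Haar coefficients $(\f,h_J)$ vanish except on $J=I$ and on the two dyadic chains $I_\pm=K_0^{(\pm)}\supset K_1^{(\pm)}\supset\cdots\supset K_k^{(\pm)}=E_\pm$. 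I would choose $\eps_I=-f/F\in[-1,1]$, $\eps_{K_j^{(+)}}=-1$ at every $j$ on the chain to $E_+$, and $\eps_{K_j^{(-)}}=+1$ at every $j$ on the chain to $E_-$; the remaining $\eps_J$ can be set to $0$ since the matching Haar coefficients vanish.

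The key step is to telescope along each chain to show $T\f\equiv F$ on $I\setminus(E_+\cup E_-)$. Using $(\f,h_I)=F\sqrt{|I|}$, the $\eps_I$-term contributes $-f$ on $I_+$ and $+f$ on $I_-$. Inside $I_+$, the restriction $\f|_{I_+}$ is a one-sided positive spike with $\langle|\f|\rangle_{I_+}=\langle\f\rangle_{I_+}=F+f$; the coefficient calculation $(\f,h_{K_j^{(+)}})\,h_{K_j^{(+)}}(x)=\pm(F+f)\,2^j$ (with $\pm$ depending on whether $x$ continues down the chain at step $j$ or leaves it) then collapses, for $x\in K_m^{(+)}\setminus K_{m+1}^{(+)}$, to the chain contribution
\[
(F+f)\Big(-\sum_{j=0}^{m-1}2^j\;+\;2^m\Big)\;=\;(F+f)\cdot 1\;=\;F+f,
\]
by the elementary identity $-(2^m-1)+2^m=1$. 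Adding the $\eps_I$-term yields the total $-f+(F+f)=F$ on $I_+\setminus E_+$; symmetrically the $I_-$-contributions sum to $+f+(F-f)=F$ on $I_-\setminus E_-$. Therefore $\{T\f>\la\}\supset I\setminus(E_+\cup E_-)$, whose measure is $(1-2^{-k})|I|$; letting $k\to\infty$ concludes. The degenerate case $F=0$ forces $f=0$ and $\f\equiv 0$, and then $\{T\f>\la\}=I$ whenever $\la<0=F$.
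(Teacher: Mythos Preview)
Your proof is correct. The averages $\langle|\f|\rangle_I=F$ and $\langle\f\rangle_I=f$ are immediate, the Haar coefficients along each chain are exactly as you claim (the sign on $K_j^{(\pm)}$ depends only on whether $x$ continues into $K_{j+1}^{(\pm)}$ or exits, independent of which child contains $E_\pm$), and the telescoping identity $-(2^m-1)+2^m=1$ gives $T\f\equiv F$ on $I\setminus(E_+\cup E_-)$. The choice $\eps_I=-f/F\in[-1,1]$ is legitimate since the definition of $B$ allows all $\eps_J\in[-1,1]$, and your separate treatment of $F=0$ closes the argument.

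Your route differs from the paper's in a useful way. The paper proceeds by case analysis: first it treats $f=F$ with a single positive approximate delta mass, then perturbs by an auxiliary small-support Haar function to push $\langle|\Phi|\rangle$ up to a prescribed $F>f$ while keeping $\langle\Phi\rangle$ fixed, and finally handles $F>\la\ge f$ with a further affine modification $\psi=\f+c_1H-c_2$. In each case the paper argues somewhat informally that a suitable martingale transform equals a large constant off a small set. Your two-sided spike, by contrast, hits the two constraints $\langle|\f|\rangle_I=F$ and $\langle\f\rangle_I=f$ simultaneously from the outset, and the single non-binary choice $\eps_I=-f/F$ absorbs all of the $f$-dependence, so no case split is needed and the value $T\f\equiv F$ on the good set comes out of an explicit finite computation. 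The price is that you genuinely use $\eps_I\in(-1,1)$ rather than just $\{-1,0,1\}$; the paper's construction stays within signs $\{-1,0,1\}$, which matters if one wants the obstacle condition for the $\pm1$--martingale transform specifically. For the lemma as stated, however, your argument is shorter and cleaner.
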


\begin{proof}
Let us first consider the case $f=F$, which can be viewed as the case of non-negative functions $\phi$.
Fix $\la_0$ and $\eps>0$, let $\varphi$ be a  non-negative function on $I=[0,1]$ such that it looks like $(\la_0 +\eps)\delta_{0}$, and $F=f:=\int_0^1 \varphi \, dx = \la_0 +\eps>\la_0$.  Namely, $\varphi$ is zero on the set of measure $1-\tau$, and an almost $\delta$ function times $\la_0+\eps$ on a small interval of measure $\tau$.

 As it looks as a multiple of delta function, it can be written down as $\la_0 +\eps +H$, where $H$ is a combination of Haar functions, and a martingale transform of $\phi$, namely,  $-H=\la_0+\eps>\la_0$ on a set of measure $1-\tau$ with an arbitrary small $\tau$ (the smallness is independent of $\la_0$ and $\eps$).  Then the example of $\varphi$ shows that
 $$
 B(\la_0+\eps, \la_0+\eps, \la_0)\ge 1-\tau\,.
 $$
 We have to consider the case of $f<F$ as well. If $f>\la_0$, the construction is the same. Namely, consider $\Phi:=\varphi +aS$, where $S$ is a Haar function with very small support in a small dyadic interval $\ell$ (say, of measure smaller than $\tau$) and normalized in $L^1$, let $\ell$ be contained in the set, where $\varphi$ is small ($\varphi$ is small essentially on almost the whole interval, because it looks like a positive multiple of the delta function), and ensure that $\int S\,dx=0$, and $\int |S|\,dx=1$.
 Then the example of $\varphi$ shows that
 $$
 B(\int_0^1 |\Phi|\,dx, \la_0+\eps, \la_0)\ge 1-2\tau\,.
 $$
 By varying $a$  from $0$ to $\infty$ we can reach $\int |\Phi|\,dx = F$ for any $F\ge \la_0+\eps$. Therefore, making first $\tau\rightarrow 0$ and then $\eps\rightarrow 0$, we prove \eqref{obst}.

We are left to consider the case $F>\la_0\ge f$. Choose $\eps$ and $\tau$ much smaller than, say, $\frac1{10}(F-f)$. Consider the same function $\varphi$, as above. Let $H$ be  the first Haar function, namely $H= -1$ on $I_-=[0, 1/2]$ and $H= 1$ on $I_+=[1/2,1]$. Let us consider now $\psi:= \varphi+c_1\cdot H-c_2$, $c_1>c_2>0$. Then
 $$
 \langle \psi\rangle = \la_0+\eps -c_2,\, \langle|\psi|\rangle = \la_0+ c_1 +O(\tau).
 $$
 It is easy now to choose $c_1, c_2$ such that the first average above is equal to a given number $f$, and the second one is equal to a given $F$, $F>f$. Now on the set $E$ of measure $1-\tau$ we have $\psi= c_1H -c_2$. On the other hand $\psi= \la_0+\eps +c_1 H- c_2 + H_1$, where $H_1$ is a combination of Haar function, each of which is orthogonal to $H$.
 
 Hence, $-H_1 = \la_0 +\eps >\la_0$ on $E$ of measure $1-\tau$. But $-H_1$ is the martingale transform of $\psi$ in our sense. In fact, we just consider the Haar decomposition of $\psi$, forget the constant term, and multiply all Haar coefficients on $-1$ except the first one, which is got multiplied by $0$.
 
 We obtain that $B(F, f, \la_0) \ge 1-\tau$. We are done.


\end{proof}

\bigskip

\begin{thm}
\label{obratno}
Let $B\ge 0$ satisfy \eqref{mi1}, \eqref{mi2}. (Equivalently, let the corresponding $M\ge 0$ be concave in $(F, y_1)$ and in $(F, y_2)$.) Let
$B$ satisfy \eqref{wt1}, or, equivalently,
\begin{equation}
\label{wt2}
M(F,y_1,y_2)\le \frac{C\,F}{y_1+y_2}, \,y_1 +y_2>0\,.
\end{equation}
Let $B(F, f, \la)=1$ if $\la<0$.
Then we have the weak type estimate with constant at most $C$ for all $T$ uniformly in $\eps_I\in [-1,1]$.
\end{thm}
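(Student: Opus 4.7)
The plan is a textbook Bellman induction on the dyadic tree. Fix $\varphi \in L^1(I_0)$ — initially a dyadic step function with only finitely many nonzero Haar coefficients, so that $T\varphi$ is a finite sum (the general case follows by density) — a threshold $\lambda > 0$, and signs $\eps_J \in [-1,1]$. To each dyadic $I \subseteq I_0$ attach the Bellman point
\[
P_I := (F_I, f_I, \lambda_I), \quad F_I := \langle|\varphi|\rangle_I, \quad f_I := \langle\varphi\rangle_I, \quad \lambda_I := \lambda - \sum_{J \supsetneq I,\, J \in D} \eps_J (\varphi, h_J)\, h_J|_I,
\]
so $\lambda_I$ is the residual height that $T\varphi$ inside $I$ must still accumulate in order to exceed $\lambda$. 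With $\beta_I := (\varphi, h_I)/\sqrt{|I|}$, the usual Haar identities give $f_{I_\pm} = f_I \pm \beta_I$ and $\lambda_{I_\pm} = \lambda_I \mp \eps_I \beta_I$; the analogous computation for $|\varphi|$ gives $F_{I_\pm} = F_I \pm \alpha_I$ with $|\beta_I| \le \alpha_I$, so every $P_I$ lies in $\Omega$.

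Consequently the triple $(P_I, P_{I_+}, P_{I_-})$ matches the configuration of \eqref{mi2} when $\eps_I = +1$ and of \eqref{mi1} when $\eps_I = -1$. In either extreme case we obtain the one-step descent
\[
B(P_I) \;\ge\; \tfrac12\bigl(B(P_{I_+}) + B(P_{I_-})\bigr).
\]
Iterating from $I_0$ down to any level $N$ fine enough that $\varphi$ is constant on every level-$N$ interval, and using $B \ge 0$, we get
\[
B(F, f, \lambda) = B(P_{I_0}) \;\ge\; \sum_{|I| = 2^{-N}} |I|\, B(P_I).
\]
At such an $N$, $T\varphi$ is constant on each level-$N$ interval $I$ with value $\lambda - \lambda_I$, so $\{x : T\varphi(x) > \lambda\} = \bigsqcup_{\lambda_I < 0} I$. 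The obstacle hypothesis $B \ge 1$ on $\{\lambda < 0\}$ then yields
\[
|\{T\varphi > \lambda\}| = \sum_{\lambda_I < 0} |I| \;\le\; \sum_I |I|\, B(P_I) \;\le\; B(F, f, \lambda) \;\le\; \frac{C F}{\lambda} = \frac{C\|\varphi\|_{L^1}}{\lambda}.
\]
Applying the same argument to $-\varphi$ (same $F$, negated $f$, same $\eps_J$) handles the opposite tail $\{T\varphi < -\lambda\}$, giving the claimed weak $L^1 \to L^{1,\infty}$ estimate with constant $2C$.

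The only genuinely delicate point is upgrading the Bellman descent to arbitrary $\eps_I \in (-1, 1)$ uniformly; for the extremes $\eps_I = \pm 1$ it is precisely \eqref{mi1} or \eqref{mi2}. The splits $P_{I_\pm}^{\eps_I}$ lie on the segment between $P_{I_\pm}^{(+1)}$ and $P_{I_\pm}^{(-1)}$ (they differ only in the $\lambda$-coordinate, i.e.\ along the diagonal direction of $(y_1, y_2)$), so one hopes to read off the required inequality from \eqref{mi1}, \eqref{mi2} together with concavity of $M$ along that diagonal. Such ``diagonal concavity'' is not a formal consequence of separate concavity in $y_1$ and $y_2$, and I expect this to be the principal technical obstacle — the resolution is either an additional structural property of the specific $B$ in play (automatic for the true extremal Bellman function, defined as a supremum over $\eps_J \in [-1,1]$), or a reduction via extremal signs and a limiting argument. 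Apart from this point, the remainder — Bellman descent, obstacle step, tail symmetry, and density in $L^1$ — is routine.
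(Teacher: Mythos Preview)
Your argument is the standard Bellman induction and is exactly what the paper means by ``reversing the argument of Theorem~\ref{tuda}'': attach the data $(F_I,f_I,\lambda_I)$ to each dyadic $I$, use \eqref{mi1}/\eqref{mi2} for the one-step descent, telescope to a fine level, and read off the level set from the obstacle condition. The details you supply (the identification $T\varphi|_I=\lambda-\lambda_I$ at the terminal level, the use of $B=1$ on $\{\lambda<0\}$, the density reduction) are all correct. The paper's own proof is literally one line, so you have reproduced it with the bookkeeping filled in.

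Your caveat about $\eps_I\in(-1,1)$ is well taken and is not a defect of your approach but of the theorem as stated. The hypotheses \eqref{mi1}, \eqref{mi2} are precisely biconcavity of $M$ in $(F,y_1)$ and in $(F,y_2)$; the split for a general $\eps_I$ moves along the direction $\bigl(\alpha,\tfrac{1-\eps_I}{2}\beta,-\tfrac{1+\eps_I}{2}\beta\bigr)$ in $(F,y_1,y_2)$, and concavity along such a mixed direction does \emph{not} follow from separate concavity in $(F,y_1)$ and $(F,y_2)$ alone (the cross term $M_{y_1y_2}$ is uncontrolled). The paper's one-line proof does not address this either: reversing Theorem~\ref{tuda} uses only $\eps_I=\pm1$, so what is actually proved from the stated hypotheses is the weak type for $\eps_I\in\{-1,1\}$. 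For the \emph{extremal} Bellman function (defined as a supremum over $|\eps_J|\le 1$) one has the additional concavity corresponding to $\eps_I=0$ --- exactly the analogue of Theorem~\ref{tudaVYP} in the weighted section --- and together these three do force the full descent; but that is extra structure not listed among the hypotheses here. So your flag is accurate, and your proof is complete for the case $\eps_I=\pm1$, which is all the paper's argument delivers as well.

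Two cosmetic remarks: the theorem's ``constant at most $C$'' refers to the one-sided tail $|\{T\varphi>\lambda\}|$, so you already have constant $C$ before passing to $-\varphi$; and the opposite tail $\{T\varphi<-\lambda\}$ is more naturally handled by replacing $(\eps_J)$ with $(-\eps_J)$ than by replacing $\varphi$ with $-\varphi$ (same effect, but it keeps $F$ and $f$ fixed).
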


\begin{proof}
Just by reversing the argument of Theorem \ref{tuda}.

\end{proof}

\bigskip

\noindent{\bf Remark.} Notice that the Bellman function $B$ defined above  satisfies by definition $B(F, f, \la)= B(F, -f, \la)$. Therefore,  Lemma \ref{obstLemma} claims  in particular that  $B(F, f, \la)=1$ if $\la<0$ (and  we saw that it also satisfies  \eqref{mi1}, \eqref{mi2}).

\medskip

Here is the Bellman function for unweighted weak type inequality for martingale transform, see \cite{RVaVo}.
\begin{thm}
\begin{equation}
\label{full}
B(F,f,\la)= \begin{cases} 1, \,\, \text{if} \,\, \la\le F\,, \\
1-\frac{(\la-F)^2}{\la^2-f^2}\,\,\text{if}\,\, \la > F\,.\end{cases}
\end{equation}
\end{thm}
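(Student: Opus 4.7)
The plan is the standard two-sided Bellman verification. Denote the proposed candidate by $B_0$; in the $(F, y_1, y_2)$ coordinates of Theorem \ref{tudaM} it becomes
$$
M_0(F, y_1, y_2) = \begin{cases} 1, & y_1 + y_2 \le F,\\ 1 - \dfrac{(y_1+y_2-F)^2}{4 y_1 y_2}, & y_1 + y_2 > F. \end{cases}
$$
I would prove $B \le B_0$ (the supersolution direction) and $B \ge B_0$ (the extremizer direction) separately.

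For the upper bound, I would first check $0 \le M_0 \le 1$ on $G$ (using $(y_1+y_2-F)^2 < (y_1+y_2)^2 - (y_1-y_2)^2 = 4 y_1 y_2$ whenever $|y_1-y_2| \le F < y_1+y_2$), and then verify the concavity of $M_0$ in each of the slices $y_2 = \text{const}$ and $y_1 = \text{const}$. On the lower piece $\{y_1+y_2 > F\}$ a direct Hessian computation shows the determinant vanishes identically while the trace is strictly negative, so $M_0$ is concave in each slice there. Since the gradient of the lower piece vanishes on the interface $\{y_1+y_2 = F\}$, $M_0$ is $C^1$ across it and remains concave in each slice throughout $G$; by Theorem \ref{tudaM}, $B_0$ satisfies the main inequalities \eqref{mi1} and \eqref{mi2}. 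Now run the Bellman induction: for any admissible $\f$ on $I_0$, iterating the main inequality down to generation $n$ gives
$$
B_0(F, f, \la) \ge \frac{1}{|I_0|} \sum_{J \in D,\, J \subseteq I_0,\, |J|=2^{-n}} |J|\, B_0(F_J, f_J, \la_J),
$$
where $\la_J$ tracks the threshold $\la$ adjusted by the partial Haar sum up to the scale of $J$. As $n \to \infty$, Lebesgue differentiation gives $F_J(x) \to |\f(x)|$ and $f_J(x) \to \f(x)$, while a.e.\ convergence of the dyadic martingale gives $\la_J(x) \to \la - T\f(x)$. On $\{T\f > \la\}$ one has $\la_J < 0 \le F_J$ eventually, so the obstacle of Lemma \ref{obstLemma} forces $B_0(F_J, f_J, \la_J) \to 1$ there, and Fatou combined with $B_0 \ge 0$ yields $B_0(F, f, \la) \ge |\{T\f > \la\}|/|I_0|$.

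For the lower bound I would construct extremizers along the degenerate direction of the concavity. Since the determinant of the restricted Hessian vanishes, its kernel in the $(F, y_1)$ slice is spanned by $(y_2-F,\,-y_1)$, and $M_0$ is affine along integral curves in that direction. A dyadic splitting whose children $P_\pm$ lie on such a segment through $P$ therefore saturates \eqref{mi1}. I would build $\f$ recursively on a dyadic tree, at each node following the saturating splitting in the appropriate slice ($\eps_I = -1$ fixes $y_2$, $\eps_I = +1$ fixes $y_1$), and continuing until a branch reaches the obstacle region $\la \le F$, where the near-delta-function example from Lemma \ref{obstLemma} fills in an interval of measure $1-\tau$ with $\tau$ arbitrarily small. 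A telescoping measure bookkeeping over all leaves then yields $|\{T\f > \la\}|/|I_0|$ arbitrarily close to $B_0(F, f, \la)$. The main obstacle is this extremizer construction --- in particular verifying that $P_\pm \in \Om$ (i.e.\ $|f_\pm| \le F_\pm$) at every split and that the tree terminates gracefully at the obstacle boundary with the correct total mass. The upper-bound verification, while requiring care at the $C^1$ interface, reduces to the direct Hessian computation described above.
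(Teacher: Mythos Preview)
The paper does not give its own proof of this theorem; the paragraph following \eqref{full} simply records that the candidate was found in \cite{RVaVo} via the Monge--Amp\`ere equation and that the verification theorem is carried out there. Your plan is exactly that verification theorem: the supersolution half (check bi-concavity of the candidate in each slice, $C^1$ matching across the interface, then the Bellman induction down the dyadic tree to the obstacle) and the subsolution half (extremizers along the degenerate direction of the Hessian, i.e., the Monge--Amp\`ere characteristics). Your Hessian computation is correct --- on the lower piece the $(F,y_1)$-Hessian has vanishing determinant with null line $(F-y_2,\,y_1)$, the same as your $(y_2-F,\,-y_1)$ --- and the $C^1$ gluing at $\{y_1+y_2=F\}$ is as you describe.

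So your proposal matches the approach of \cite{RVaVo}. The one place where real work remains, as you yourself flag, is the extremizer construction: one must check that at each step the children $P_\pm$ obtained by moving along the null line stay inside $\Omega$ (i.e., $|f_\pm|\le F_\pm$), that one of the two children can be driven toward the obstacle $\{\lambda\le F\}$ in finitely many (or a controlled infinite family of) steps, and that the mass bookkeeping over the leaves reproduces $B_0$. In \cite{RVaVo} this is done by writing the characteristics explicitly and tracking the trajectory; your sketch correctly identifies this as the substantive part.
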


\medskip

In \cite{RVaVo} this formula was found by the use of Monge--Amp\`ere equation. As always in stochastic optimal control related problems (and this is one of such, see the explanation in \cite{NTVot}) one needs to prove that the solution of Bellman equation is actually the Bellman function. This is called ``verification theorem", and it is proved in \cite{RVaVo} as well.

\section{Weighted estimate. $A_1$ case}
\label{a1}

We keep the notations--almost. Now $w$ will be not an arbitrary weight but a dyadic $A_1$ weight. Meaning that
$$
\forall I \in D\,\, \langle w\rangle_I \le Q \inf_I w\,.
$$
The best $Q$ is called $[w]_{A_1}$. Now
$$
F=\langle |f|w\rangle_I , f=\langle f \rangle_I , \la=\la, w=\langle w\rangle_I, m=\inf_I w \,.
$$

We are in the domain
\begin{equation}
\label{O5}
\Omega:=\{(F, w, m, f, \la): F\ge |f|\,m,\,\,\, m\le w\le Q\,m\}\,.
\end{equation}
Introduce
\begin{equation}
\label{B5}
\B(F,w,m,f, \la):= \sup\,\frac1{|I|}w\{x\in I: \sum_{J\subseteq I, J\in D} \eps_J(\f, h_J) h_J(x)>\la\}\,,
\end{equation}
where the $sup$ is taken over all $\eps_J, |\eps_J|\le 1, J\in D,\, J\subseteq I$, and over all $f\in L^1(I,wdx)$ such that $F:=\langle |f|\,w\rangle_{I}\,,\,f:=\langle f\rangle_{I}
$, $ w=\langle w\rangle_I , m\le \inf_I w$,  and $w$ are dyadic $A_1$ weights, such that $
\forall I \in D\,\, \langle w\rangle_I \le Q \inf_I w$, and $Q$ being the best such constant. In other words $Q:=[w]_{A_1}^{dyadic}$.
Recall that $ h_I$ are normalized in $L^2(\R)$ Haar function of the  cube (interval) $I$, and $|\cdot |$ denote Lebesgue measure.

\subsection{Homogeneity}
\label{hom}

By definition, it is clear that
$$
s\B(F/s,w/s,m/s,f, \la)= \B(F,w,m,f, \la)\,,
$$
$$
\B(tF,w,m,tf, t\la)=\B(F,w,m,f, \la)\,.
$$
Choosing $s=m$ and $t=\la^{-1}$, we can see that
\begin{equation}
\label{Bn}
\B(F,w,m,f, \la) = m B(\frac{F}{m\la}, \frac{w}{m}, \frac{f}{\la})
\end{equation}
for a certain function $B$. Introducing new variables $\al= \frac{F}{m\la}, \beta=\frac{w}{m} ,\gamma=\frac{f}{\la}$ we write that $B$ is defined in
\begin{equation}
\label{G3}
G:=\{(\al, \beta, \gamma): |\gamma|\le \al, 1\le \beta\le Q\}\,.
\end{equation}

\subsection{The main inequality}

\begin{thm}
\label{tudaweight}
Let $P, P_+,P_-\in \Om, P=(F,w,\min(m_+, m_-), f,\la)$, $P_+=(F+\al, w+\gamma, m_+, f+\beta, \la +\beta)$, $P_-=(F-\al, w-\gamma, m_-, f-\beta, \la -\beta)$. Then
\begin{equation}
\label{mi11}
\B(P)-\frac12(\B(P_+)+\B(P_-))\ge 0\,.
\end{equation}
At the same time, if $P, P_+,P_-\in \Om, P=(F,w, \min(m_+, m_-),f, \la)$, $P_+=(F+\al,   w+\gamma, m_+,f+\beta, \la -\beta)$, $P_-=(F-\al, w+\gamma, m_+,f-\beta,  \la +\beta)$. Then
\begin{equation}
\label{mi21}
\B(P)-\frac12(\B(P_+)+\B(P_-))\ge 0\,.
\end{equation}
In particular, with fixed $m$, and with all points being inside $\Omega$ we get
$$
\B(F,  w, m, f, \la) -\frac14 (\B(F-dF, w-dw, m, f-d\la, \la-d\la) +\B(F-dF, w-dw, m, f+d\la, \la-d\la) +
$$
\begin{equation}
\label{4conc}
\B(F+dF, w+dw, m, f-d\la, \la+d\la) +\B(F+dF, w+dw, m, f+d\la, \la+d\la) )\ge 0\,.
\end{equation}
\end{thm}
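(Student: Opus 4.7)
\medskip
\noindent\textbf{Proof strategy.} The plan is to mirror the gluing argument from Theorem \ref{tuda}, with two added ingredients: we must paste together weights so that the glued weight remains dyadically $A_1$ with constant at most $Q$, and we must track the $m$-coordinate correctly. Once \eqref{mi11} and \eqref{mi21} are in hand, \eqref{4conc} is just their average.

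Fix $\eta>0$ and, for each sign, take near-optimizers in the definition \eqref{B5} of $\B(P_\pm)$: a dyadic $A_1$ weight $w_\pm$ on $I_\pm$ with constant at most $Q$, a function $\f_\pm$ with $\langle|\f_\pm|w_\pm\rangle_{I_\pm}=F\pm\al$, $\langle w_\pm\rangle_{I_\pm}=w\pm\gamma$, $\inf_{I_\pm}w_\pm=m_\pm$, $\langle\f_\pm\rangle_{I_\pm}=f\pm\beta$, together with multipliers $\eps_J\in[-1,1]$ for $J\subseteq I_\pm$ that realize the $w_\pm$-measure up to $\eta$. Set $\f:=\f_+\1_{I_+}+\f_-\1_{I_-}$ and $w:=w_+\1_{I_+}+w_-\1_{I_-}$ on $I$, and append any value of $\eps_I\in[-1,1]$ to the glued sequence of multipliers. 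The four averaged quantities are automatic: $\langle|\f|w\rangle_I=F$, $\langle w\rangle_I=w$, $\langle\f\rangle_I=f$, and $\inf_Iw=\min(m_+,m_-)$.

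The crucial verification is that $w$ is dyadically $A_1$ on $I$ with constant at most $Q$. On any dyadic subinterval of $I_+$ or $I_-$ this is inherited from $w_+$ or $w_-$. On $I$ itself the condition $\langle w\rangle_I\le Q\inf_Iw$ reads $w\le Q\min(m_+,m_-)$, which is precisely the hypothesis $P\in\Om$ in \eqref{O5}. This is the only place where the constraint $w\le Qm$ is used, and it is exactly why the center point of the theorem carries $\min(m_+,m_-)$ in its third slot rather than either of $m_+$ or $m_-$ individually; checking this $A_1$ admissibility is the one genuine obstacle relative to the unweighted setting.

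Now compute the Haar coefficient $(\f,h_I)/\sqrt{|I|}=\tfrac12(\langle\f\rangle_{I_+}-\langle\f\rangle_{I_-})=\beta$. Choose $\eps_I=-1$; then at $x\in I_+$ one has $\sum_{J\subseteq I}\eps_J(\f,h_J)h_J(x)>\la$ iff the restricted sum over $J\subseteq I_+$ exceeds $\la+\beta$, and similarly for $I_-$ with threshold $\la-\beta$. Integrating against $w$ on each half and adding gives
\begin{equation*}
\tfrac1{|I|}\,w\{x\in I: T\f>\la\}\ge\tfrac12(\B(P_+)+\B(P_-))-2\eta.
\end{equation*}
Since the left-hand side is bounded by $\B(P)$ and $\eta>0$ is arbitrary, \eqref{mi11} follows. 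Repeating verbatim with $\eps_I=+1$ swaps the two shifts and produces \eqref{mi21}.

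Finally, \eqref{4conc} follows from \eqref{mi11} and \eqref{mi21}: group the four corners by whether the triple $(dF,dw,d\la)$ moves in the same direction as $df$ or the opposite direction. In the first case the pair has midpoint $P$ and is controlled by \eqref{mi11}; in the second case the pair also has midpoint $P$ and is controlled by \eqref{mi21}. Averaging the two resulting two-point concavities gives the four-point inequality.
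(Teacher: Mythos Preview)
Your argument is correct and follows essentially the same gluing approach as the paper's own proof. You add two useful details the paper leaves implicit: the explicit verification that the glued weight is dyadically $A_1$ with constant at most $Q$ on $I$ (using precisely the hypothesis $P\in\Om$), and the explicit grouping of the four corners into two pairs handled by \eqref{mi11} and \eqref{mi21} to obtain \eqref{4conc}. One tiny imprecision: since the definition \eqref{B5} only requires $m\le\inf_I w$, your near-optimizers satisfy $\inf_{I_\pm}w_\pm\ge m_\pm$, so $\inf_I w\ge\min(m_+,m_-)$ rather than equality; this does not affect the argument.
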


\bigskip

\noindent{\bf Remark.}1) Differential notations $dF, dw, d\la$ just mean small numbers.
2) In \eqref{4conc} we loose a bit of information (in comparison to \eqref{mi11},\eqref{mi21}), but this is exactly \eqref{4conc} that we are going to use in the future.

\bigskip

\begin{proof}
Fix $P, P_+,P_-\in \Om$.
Let $\varphi_+, \varphi_-$, $w_+, w_-$ be functions and weights giving the supremum in $B(P_+), B(P_-)$ respectively up to a small number $\eta>0$.
Using the fact that $\B$ does not depend on $I$, we think that $\varphi_+, w_+$ is on $I_+$ and $\varphi_-, w_-$ is on $I_-$. Consider
$$
\f(x):=\begin{cases} \varphi_+(x)\,,\, x\in I_+\\ \varphi_-(x)\,,\, x\in I_-\end{cases}
$$
$$
\omega(x):=\begin{cases} w_+(x)\,,\, x\in I_+\\ w_-(x)\,,\, x\in I_-\end{cases}
$$
Notice that then
\begin{equation}
\label{beta11}
(\f, h_I)\cdot\frac1{\sqrt{|I|}} = \beta\,.
\end{equation}
Then it is easy to see that
\begin{equation}
\label{av11}
\langle | \f| \omega\rangle_I = F=P_1, \,\,\,\langle \f \rangle_I =f=P_4\,.
\end{equation}
Notice that for $x\in I_+$ using \eqref{beta11}, we get if $\eps_I =-1$
$$
\frac1{|I|}w_+\{x\in I_+: \sum_{J\subseteq I_+, J\in D} \eps_J(\f, h_J) h_J(x)>\la\}= \frac1{|I|}w_+\{x\in I_+: \sum_{J\subseteq I_+, J\in D} \eps_J(\f, h_J) h_J(x)>\la+\beta\}
$$
$$
=\frac1{2|I_+|}w_+\{x\in I_+: \sum_{J\subseteq I_+, J\in D} \eps_J(\varphi_+, h_J) h_J(x)>P_{+,3}\}\ge \frac12 B(P_+)-\eta\,.
$$
Similarly, for $x\in I_-$ using \eqref{beta11}, we get if $\eps_I =-1$
$$
\frac1{|I|}w_-\{x\in I_-: \sum_{J\subseteq I, J\in D} \eps_J(\f, h_J) h_J(x)>\la\}= \frac1{|I|}w_-\{x\in I_-: \sum_{J\subseteq I_-, J\in D} \eps_J(\f, h_J) h_J(x)>\la-\beta\}
$$
$$
=\frac1{2|I_-|}w_-\{x\in I_-: \sum_{J\subseteq I_-, J\in D} \eps_J(\varphi_-, h_J) h_J(x)>P_{-,3}\}\ge \frac12 B(P_-)-\eta\,.
$$
Combining the two left hand sides we obtain for $\eps_I=-1$
$$
\frac1{|I|}\om\{x\in I_+: \sum_{J\subseteq I, J\in D} \eps_J(\f, h_J) h_J(x)>\la\}\ge \frac 12 (B(P_+)+B(P_-))-2\eta\,.
$$
Let us use now the simple information \eqref{av11}: if we take the supremum in the left hand side over all functions $\varphi$, such that $\langle |\varphi | \,w\rangle_I =F, \langle \varphi \rangle_I =f , \langle\om \rangle=w$, and weights $\omega$: $\langle \omega\rangle =w$, in dyadic $A_1$ with $A_1$-norm at most $Q$, and supremum over all $\eps_J=\pm 1$ (only $\eps_I=-1$ stays fixed), we get a quantity smaller or equal than the one, where we have the supremum over all functions $\varphi$, such that $\langle |\varphi |\, \omega\rangle =F, \langle \varphi \rangle_I =f, \langle \om\rangle =w $, and weights $\omega$: $\langle \omega\rangle =w$, in dyadic $A_1$ with $A_1$-norm at most $Q$, and an unrestricted  supremum over all $\eps_J=\pm 1$ including $\eps_I=\pm 1$. The latter quantity is of course $\B(F, w, m,f, \la)$. So we proved \eqref{mi1}.

To prove \eqref{mi2} we repeat verbatim the same reasoning, only keeping now $\eps_I=1$. We are done.

\end{proof}

\medskip

\noindent{\bf Remark.} This theorem is a sort of ``fancy" concavity property, the attentive reader would see that \eqref{mi11}, \eqref{mi21} represent bi-concavity not unlike demonstrated by the celebrated  Burkholder's function. We will use the consequence of bi-concavity encompassed by \eqref{4conc}. There is still another concavity if we allow to have $|\eps_J|\le 1$.

\bigskip

\begin{thm}
\label{tudaVYP}
In the definition of $\B$ we allow now to take supremum over all $|\eps_j|\le 1$. Let $P, P_+,P_-\in \Om, P=(F,w,m, f,\la)$, $P_+=(F+\al, w+\gamma, m, f+\beta, \la )$, $P_-=(F-\al, w-\gamma, m, f-\beta, \la )$. Then
\begin{equation}
\label{3conc}
\B(P)-\frac12(\B(P_+)+\B(P_-))\ge 0\,.
\end{equation}
\end{thm}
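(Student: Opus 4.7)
The plan is to run the same gluing argument as in Theorem \ref{tudaweight}, but now exploit the new freedom $|\eps_I|\le 1$ by choosing $\eps_I=0$ at the top interval, which is precisely what kills the shift of the $\la$-coordinate between $P_+$ and $P_-$.

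First, fix $\eta>0$ and pick functions $\varphi_+, \varphi_-$ and weights $w_+, w_-$ on $I_+=[0,\tfrac12]$, $I_-=[\tfrac12,1]$ respectively, each almost-extremal for $\B(P_+),\B(P_-)$ up to $\eta$, together with admissible choices $|\eps_J|\le 1$ for $J\subsetneq I_\pm$. Glue them into $\f,\om$ on $I=[0,1]$ exactly as in the previous proof. Because the averaging identities are linear in $F$ and $f$, one checks $\langle|\f|\om\rangle_I=\tfrac12(F+\al)+\tfrac12(F-\al)=F$, $\langle\f\rangle_I=f$, $\langle\om\rangle_I=w$; and since the $m$-coordinate is the same on the two halves, $\inf_I\om=m$ is unchanged, while the dyadic $A_1$ estimate for $\om$ on all proper dyadic subintervals is inherited from $w_\pm$, and on $I$ itself reduces to $w\le Qm$, which is part of $P\in\Om$. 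So the glued pair $(\f,\om)$ is an admissible competitor in the definition of $\B(P)$.

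The decisive step is the choice $\eps_I=0$. With this choice, the Haar component at scale $I$ contributes nothing to $T\f$, so $T\f$ restricted to $I_+$ is exactly the martingale transform of $\varphi_+$ on $I_+$ with coefficients $\eps_J$, $J\subseteq I_+$; and similarly on $I_-$. Hence, with no shift in the threshold,
\begin{equation*}
\frac{1}{|I|}\om\{x\in I_+:T\f(x)>\la\}=\tfrac12\cdot\frac{1}{|I_+|}w_+\{x\in I_+:(T\varphi_+)(x)>\la\}\ge\tfrac12\B(P_+)-\eta,
\end{equation*}
and the analogous estimate on $I_-$ gives $\tfrac12\B(P_-)-\eta$. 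Adding these and taking the supremum over admissible competitors on the left (which is bounded above by $\B(P)$) yields $\B(P)\ge\tfrac12(\B(P_+)+\B(P_-))-2\eta$; letting $\eta\to0$ finishes \eqref{3conc}.

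The only point that needs care is the admissibility of $\eps_I=0$: this is where the hypothesis of the theorem ($|\eps_J|\le 1$ allowed, not just $\pm 1$) is essential. All other ingredients, namely the weight-gluing check for the dyadic $A_1$ constraint with norm $\le Q$ and the preservation of the first and second averages, are routine and identical to those in Theorem \ref{tudaweight}. I expect no genuine obstacle here; the substance of the theorem is simply the observation that the extra freedom $\eps_I=0$ converts the ``diagonal'' bi-concavity \eqref{mi11}--\eqref{mi21} into ordinary concavity in the $(F,w,f)$ variables with $(m,\la)$ held fixed.
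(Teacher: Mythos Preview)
Your proposal is correct and is exactly the paper's approach: the paper's entire proof is the single line ``We repeat the proof of \eqref{mi11} but with $\eps_I=0$,'' and you have simply written out that repetition in full, including the routine verification that the glued pair $(\f,\om)$ is admissible and that the choice $\eps_I=0$ leaves the threshold $\la$ unshifted on each half.
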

\begin{proof}
We repeat the proof of \eqref{mi11} but with $\eps_I=0$.

\end{proof}

\begin{thm}
\label{tudaDM}
For fixed $F, w, f,\la$ function $\B$ is decreasing in $m$.
\end{thm}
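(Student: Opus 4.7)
The plan is to deduce the monotonicity directly from the definition \eqref{B5} by comparing the admissibility classes for two values of $m$. Fix $(F, w, f, \la)$ and consider $m_1 \le m_2$ with both quintuples $(F, w, m_i, f, \la)$ lying in the domain $\Om$ of \eqref{O5}, i.e., $w/Q \le m_i \le w$.

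Let $\mathcal{A}(m)$ denote the class of data $(\varphi, \omega, \{\eps_J\})$ admissible in \eqref{B5} at parameter $m$: thus $\omega$ is a dyadic $A_1$ weight on $I$ with $[\omega]_{A_1}^{dyadic} \le Q$, $\langle \omega \rangle_I = w$, $\inf_I \omega \ge m$, while $\varphi \in L^1(I, \omega\,dx)$ satisfies $\langle |\varphi|\omega \rangle_I = F$ and $\langle \varphi \rangle_I = f$, and the multipliers satisfy $|\eps_J| \le 1$. Among all these constraints, only $\inf_I \omega \ge m$ involves $m$. Therefore $m_1 \le m_2$ immediately yields $\mathcal{A}(m_2) \subseteq \mathcal{A}(m_1)$, since any triple admissible at the larger value $m_2$ satisfies $\inf_I \omega \ge m_2 \ge m_1$ and hence is also admissible at $m_1$.

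Taking the supremum of the single functional
$$
(\varphi, \omega, \{\eps_J\}) \longmapsto \frac{1}{|I|}\,\omega\Big\{x \in I : \sum_{J \subseteq I,\, J \in D} \eps_J (\varphi, h_J) h_J(x) > \la \Big\}
$$
over the larger set $\mathcal{A}(m_1)$ will then produce at least the value obtained over $\mathcal{A}(m_2)$, yielding $\B(F, w, m_1, f, \la) \ge \B(F, w, m_2, f, \la)$, which is the stated monotonicity. The argument is essentially an unwinding of \eqref{B5} and presents no serious difficulty; the one point worth stressing is that the $A_1$-bound $Q$ is held fixed throughout — it is a constraint on the competing weights $\omega$ themselves — so that varying $m$ truly relaxes only the single condition $\inf_I \omega \ge m$ without altering the rest of the admissible class. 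With $Q$ fixed, staying inside $\Om$ amounts to restricting $m$ to the interval $[w/Q, w]$ determined by the fixed $w$, and the monotonicity holds throughout this interval.
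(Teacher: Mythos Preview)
Your proof is correct. It is, however, a different route from the paper's own argument. The paper derives the monotonicity as a degenerate instance of the main inequality \eqref{mi11} in Theorem~\ref{tudaweight}: setting $\alpha=\beta=\gamma=0$, $m_-=m$, $m_+>m$ in that inequality yields $\B(F,w,m,f,\la)\ge \tfrac12\big(\B(F,w,m_+,f,\la)+\B(F,w,m,f,\la)\big)$, hence $\B(F,w,m,f,\la)\ge\B(F,w,m_+,f,\la)$. You instead go directly to the definition \eqref{B5} and observe that the only constraint in the admissible class that involves $m$ is $\inf_I\omega\ge m$, so enlarging $m$ shrinks the competitor set and can only lower the supremum. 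Your argument is more elementary and self-contained; the paper's approach has the virtue of exhibiting the monotonicity as a formal consequence of the Bellman ``bi-concavity'' structure already established, which is conceptually tidy within the framework but relies on the heavier Theorem~\ref{tudaweight}. One minor remark: when you describe the range of admissible $m$ inside $\Omega$ as $[w/Q,w]$, note that the constraint $F\ge |f|m$ may impose the additional upper bound $m\le F/|f|$; this does not affect your inclusion argument.
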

\begin{proof}
Let $m=\min(m_-,m_+)= m_-$. And let $m_+>m$. Then \eqref{mi11} becomes
$$
\B(F, w, m, f, \la)- \B(F, w, m_+, f, \la) \ge 0\,.
$$
This is what we want.
\end{proof}

\subsection{Differential properties of $\B$ translated to differential properties of $B$}
\label{diff}

It is convenient to introduce an auxiliary functions of $4$ and $3$ variables:
$$
\wt B(x, y,f, \la):= B(\frac{x}{\la}, y, \frac{f}{\la})\,.
$$
Of course
\begin{equation}
\label{53}
\B(F,w,m, f, \la)= m\wt B(\frac{F}{m}, \frac{w}{m}, f, \la)=m B(\frac{F}{m\la}, \frac{w}{m}, \frac{f}{\la})\,.
\end{equation}
\begin{lm}
\label{incr}
Function $B$ increases in the first and in the second variable.
\end{lm}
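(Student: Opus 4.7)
\textit{Plan.} By the homogeneity identity \eqref{53}, monotonicity of $B$ in its first variable $\alpha$ (resp.\ second variable $\beta$) is equivalent to the assertion that $\B(F,w,m,f,\la)$ is non-decreasing in $F$ (resp.\ in $w$) with the remaining four parameters held fixed. For each I would produce, from an $\eta$-extremal admissible triple for the smaller parameter value, a new admissible triple for the larger one whose objective is only $\eta+o(1)$ worse.

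For monotonicity in $F$ (equivalently in $\alpha$): given $(\f,\om,\{\eps_J\})$ achieving $\B(F,w,m,f,\la)-\eta$ and a target $F'>F$, pick a very small dyadic $J_0\subset I$ around a Lebesgue point of $\om$ lying in the set $\{\om\le 2w\}$, which has measure at least $|I|/2$ by Chebyshev's inequality, so that $\om(J_0)/|I|=|J_0|\langle\om\rangle_{J_0}/|I|=o(1)$ as $|J_0|\to 0$. Define $\f':=\f+c\,h_{J_0}$ with $c$ chosen via the intermediate value theorem so that $\langle|\f'|\om\rangle_I=F'$ (the map $c\mapsto\langle|\f+c\,h_{J_0}|\om\rangle_I$ is continuous in $c$, equals $F$ at $c=0$, and tends to $\infty$ as $|c|\to\infty$), and set $\eps'_{J_0}:=0$ (permitted since $|\eps_J|\le 1$) while keeping the other $\eps_J$ intact. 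Then $\langle\f'\rangle_I=f$ because $\int h_{J_0}=0$; $\om$ is untouched, so $w$, $m$, and $[\om]_{A_1}$ are preserved; and since the only altered Haar coefficient is zeroed out by $\eps'_{J_0}=0$, $T_{\eps'}\f'(x)=T_{\eps}\f(x)$ for every $x\in I\setminus J_0$. Hence
$$
\om\{T_{\eps'}\f'>\la\}/|I|\ \ge\ \om\{T_{\eps}\f>\la\}/|I|-\om(J_0)/|I|,
$$
and sending $|J_0|\to 0$ and $\eta\to 0$ gives $\B(F',w,m,f,\la)\ge\B(F,w,m,f,\la)$.

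For monotonicity in $w$ (equivalently in $\beta$), I would use the Euler identity produced by differentiating the homogeneity $s\,\B(F/s,w/s,m/s,f,\la)=\B(F,w,m,f,\la)$ in $s$ at $s=1$, namely $\B=F\partial_F\B+w\partial_w\B+m\partial_m\B$, which rearranges to $w\partial_w\B=(\B-F\partial_F\B)-m\partial_m\B$. Concavity of $F\mapsto\B(F,w,m,f,\la)$ (from \eqref{mi11} with $\gamma=\beta=0$ and $m_\pm=m$), together with the non-negativity of $\B$ at the left endpoint of its domain, yields $\B-F\partial_F\B\ge 0$: in the case $f=0$ the domain extends naturally to $F=0$ with $\B(0,w,m,0,\la)=0$ and the standard concavity inequality $\B(0)\le\B(F)-F\partial_F\B$ closes the argument; for $f\ne 0$ one applies the concavity inequality at the domain boundary $F=|f|m$ and uses the explicit boundary value from the unweighted Bellman function \eqref{full} (for extremal configurations where $\om\equiv m$ on $\supp|\f|$ with $\f$ of constant sign) to verify the analogous estimate $\B(|f|m,w,m,f,\la)-|f|m\,\partial_F\B\ge 0$. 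Monotonicity in $m$ (Theorem \ref{tudaDM}) gives $-m\partial_m\B\ge 0$. Combining these two non-negative pieces yields $w\partial_w\B\ge 0$, which proves the second claim.

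\textit{Main obstacle.} The delicate step is the boundary estimate $\B(|f|m,\cdot)-|f|m\,\partial_F\B\ge 0$ at $F=|f|m$ for nonzero $f$: the $f=0$ case is immediate from concavity and the natural extension to $F=0$, while the general case requires the boundary-extremizer analysis combined with the explicit unweighted Bellman function \eqref{full}.
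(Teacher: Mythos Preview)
Your treatment of the first variable is correct and in fact more direct than the paper's. The paper perturbs $\la$ (taking $\la_1>\la_2$), builds $\phi_2=\phi_1-h$ with $h$ a small bump supported where $\omega$ attains its infimum, and then checks via the triangle-inequality computation \eqref{epsdelta}--\eqref{F1F2} that $\gamma$ stays fixed while $\alpha$ moves the right way. Your device---add $c\,h_{J_0}$ to $\f$ on a tiny $J_0$ and set $\eps'_{J_0}=0$---freezes the martingale transform off $J_0$ and moves only $F$, hence only $\alpha$, with $\beta,\gamma$ literally unchanged. This is cleaner and buys you that the level set is unchanged outside a set of vanishing $\omega$-measure, so no algebraic verification like \eqref{F1F2} is needed.

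Your argument for the second variable has a genuine gap, exactly where you flag it. Concavity of $F\mapsto\B(F,\cdot)$ on $[|f|m,\infty)$ together with $\B\ge0$ and $\partial_F\B\ge0$ do \emph{not} force $\B-F\partial_F\B\ge0$: the affine model $F\mapsto c\bigl(F-\tfrac12|f|m\bigr)$ already violates it. What you would actually need is $\partial_F\B(F)\le \B(|f|m,\cdot)/(|f|m)$, and your appeal to the unweighted formula \eqref{full} yields only a \emph{lower} bound on $\B(|f|m,\cdot)$ (of order $m$, via $\omega\ge m$ on the level set); it gives no control on $\partial_F\B$ away from the boundary, and the weighted boundary value can be of order $w\gg m$ since $\omega$ is unconstrained off $\supp\phi$. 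The paper sidesteps this entirely with a direct construction: pick a tiny dyadic interval around a point where $\omega>m$ and the local $A_1$ ratio is strictly below $Q$, then augment $\omega$ there. This preserves $m$ and $[\omega]_{A_1}$, raises $w$, and can only increase the $\omega$-measure of the unchanged level set, giving $B_\beta\ge0$ with no boundary analysis. I would recommend abandoning the Euler-identity route for $\beta$ and adopting this direct augmentation argument instead.
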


\begin{proof}
We know that by definition the RHS of \eqref{53} is getting bigger if $\la $ is getting smaller. So let us consider $\la_1>\la_2, \la_1=\la_2+\delta$, and variables $F, w, m, f$ fixed, and choose $\phi_1$ (and a weight $\omega$), $\langle \phi_1\rangle=f+\eps, \langle |\phi_1|\omega\rangle =F$, which almost realizes the supremum $\B(F, w, m, f+\eps, \la_1)$.  Consider $\phi_2$ such that $\phi_2= \phi_1 -h$. Function $h$ will be chosen later, however we say now that $h$ is equal to a certain constant $a$ on a small dyadic interval $\ell$ and is zero otherwise. Constant $a$ and interval $\ell$ we will chose later. But $\eps:=\langle h\rangle$  will be chosen very soon. Function $\phi_2$ competes for supremizing $\B$ at $(\langle|\phi_2|\omega\rangle, w, m, f, \la_2)$. We choose $\eps$ in such a way that
\begin{equation}
\label{epsdelta}
\frac{\langle \phi_1\rangle}{\la_1} = \frac{f+\eps}{\la_1} =\frac{f}{\la_1-\delta}=\frac{\langle \phi_2\rangle}{\la_2}\,.
\end{equation}
Let us prove that \eqref{epsdelta} implies that
\begin{equation}
\label{F1F2}
\frac{\langle |\phi_1|\omega\rangle}{\la_1}  \le \frac{\langle |\phi_2|\omega\rangle}{\la_2} \,.
\end{equation}
By \eqref{epsdelta} this is the same as

$$
\frac{\langle |\phi_2 +h|\omega\rangle}{\langle |\phi_2|\omega\rangle}  \le  \frac{\langle \phi_1\rangle}{\langle \phi_2\rangle}= \frac{\langle \phi_2\rangle+\eps}{\langle \phi_2\rangle}\,.
$$
The previous inequality becomes
$$
\frac{\langle |\phi_2 +h|\omega\rangle}{\langle |\phi_2|\omega\rangle}  \le 1+\frac{\langle h\rangle}{\langle\phi_2\rangle} \,.
$$
By triangle inequality the latter inequality would follow from the following one
$$
\langle|\phi_2|\,\omega\rangle \ge \langle \phi_2\rangle\frac{\langle |h|\,\omega\rangle}{\langle h\rangle}\,.
$$
We can think that the minimum $m$ of $\omega$  is attained on a whole tiny dyadic interval $\ell$ (we are talking about {\it almost} supremums).
Put $h$ to be a certain $a>0$ on this interval and zero otherwise. Of course we choose $a$ to have $\langle h\rangle = \eps$, where $\eps$ was chosen before. Now the previous display inequality becomes
$$
\langle |\phi_2|\,\omega\rangle \ge \langle \phi_2\rangle \cdot m\,,
$$
which is obvious.

Notice that $\B(\langle|\phi_2|\rangle, w, m, f, \la_2)$ as a supremum is larger than the $\omega$-measure of the level set $>\la_2$ of the martingale transform of $\phi_2$. But this is also the martingale transform of $\phi_1$. The $\la_1$-level set for any martingale transform of $\phi_1$ is smaller, as $\la_1>\la_2$. But recall that we already said that $\phi_1$ (and weight $\omega$) almost realizes its own supremum $\B(F, w, m, f+\eps, \la_1)= \B(\langle |\phi_1|\rangle, w, m, \langle\phi_1\rangle, \la_1) $
So
$$
\B(\langle |\phi_1|\rangle, w, m, \langle\phi_1\rangle, \la_1) \le \B(\langle |\phi_2|\rangle, w, m, \langle\phi_2\rangle, \la_2)\,.
$$
In other notations we get
$$
B (\frac{\langle |\phi_1|\rangle}{m\la_1}, \frac{w}{m}, \frac{\langle \phi_1\rangle}{\la_1}) \le B (\frac{\langle |\phi_2|\rangle}{m\la_2}, \frac{w}{m}, \frac{\langle \phi_2\rangle}{\la_2}) \,.
$$
Let us denote the argument on the LHS as $(x_1, y_1, z_1)$, and on the RHS as $(x_2, y_2, z_2)$. Notice that $y_1= y_2=:y$ trivially and $z_1=z_2=:z$ by \eqref{epsdelta}.
Notice also that $x_1< x_2$ by \eqref{F1F2}.  Moreover by choosing $\delta$ very small we can realize any $x_1<x_2$ as close to $x_2$ as we want. Then the last display inequality reads as
$$
B(x_1, y, z) \le B(x_2, y, z)\,.
$$
So we proved that function $B$ increases in the first variable.

The increase in the second variable is easy. Choose a dyadic interval $I$ on which $\inf_I \omega>m$, but $\langle \omega\rangle_I/\inf_I\omega <Q=:[\omega]_{A_1}$. For non-constant $\omega$ this is always possible, just take a small interval containing a point $x_0$, where $\omega(x_0) >m$.
Then augment $\omega$ on $I$ slightly to get $\omega_1$ with $\langle \omega_1\rangle=w+\eps$. It is easy to see that as a result we have the new weight with the $A_1$ norm at most $Q$, the same global infimum $m$ but a larger global average $\langle \omega\rangle$. The $\omega_1$ measure of the level set of the martingale transform will be bigger than $\omega$ measure of the same level set of the same martingale transform, and $w/m$ also grows to $(w+\eps)/m$. All other variables stay the same. So if the original $\omega$ (and some $\phi$) were (almost) realizing supremum, we would get
$$
B(x, y_1, z) \le B(x, y_2, z)
$$
for $y_1=w/m, y_2=(w+\eps)/m$.
\end{proof}

\begin{thm}
\label{bigform}
Function $B$ from \eqref{Bn} satisfies
\begin{equation}
\label{Bt}
t\rightarrow t^{-1} B(\al t, \beta t, \gamma)\,\,\text{is increasing for}\,\,\frac{|\gamma|}{\al}\le t \le \frac{Q}{\beta}\,.
\end{equation}
\begin{equation}
\label{BVYP}
B\,\, \text{is concave}\,.
\end{equation}
$$
B(\frac{x}{\la}, y, \frac{f}{\la}) -\frac14 \bigg[ B(\frac{x-dx}{\la-d\la}, y-dy, \frac{f-d\la}{\la-d\la}) +B(\frac{x-dx}{\la-d\la}, y-dy, \frac{f+d\la}{\la-d\la})+
$$
\begin{equation}
\label{Bform}
 B(\frac{x+dx}{\la+d\la}, y+dy, \frac{f-d\la}{\la+d\la}) +B(\frac{x+dx}{\la+d\la}, y+dy, \frac{f+d\la}{\la+d\la})\bigg]\ge 0
\,.
\end{equation}
\end{thm}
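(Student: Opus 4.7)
My plan is to derive all three assertions directly from the properties of $\B$ established in the preceding subsection, using the homogeneity identity \eqref{53}:
$$\B(F,w,m,f,\la)= m\, B\Big(\frac{F}{m\la},\,\frac{w}{m},\,\frac{f}{\la}\Big).$$
Each statement about $B$ should be a simple rewriting of a corresponding statement about $\B$, the main bookkeeping being to verify that the points at which $B$ is evaluated still lie in the domain $G$ of \eqref{G3}.

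For \eqref{Bt}, I will fix $(\al,\beta,\gamma)$, set $F=\al$, $w=\beta$, $f=\gamma$, $\la=1$, and allow only $m$ to vary. Substituting $m=1/t$ in \eqref{53} gives
$$\B\big(\al,\beta,\,1/t,\,\gamma,1\big) \;=\; \frac{1}{t}\,B(\al t,\beta t,\gamma),$$
so the monotonicity of $t\mapsto t^{-1}B(\al t,\beta t,\gamma)$ in $t$ is the same as the monotonicity of $\B$ in $1/m$, which is exactly Theorem \ref{tudaDM}. The admissible range of $t$ is then read off from $(\al,\beta,1/t,\gamma,1)\in\Omega$ as in \eqref{O5}. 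For the concavity assertion \eqref{BVYP}, I will invoke Theorem \ref{tudaVYP}: with $m$ and $\la$ held fixed, $\B$ is midpoint concave in $(F,w,f)$; since the map $(F,w,f)\mapsto (F/(m\la),w/m,f/\la)$ is an affine bijection with positive coefficients, midpoint concavity transfers verbatim to $B$ on $G$, and a standard measurability / upper-semicontinuity argument for the defining supremum upgrades this to genuine concavity. Finally, the four-point inequality \eqref{Bform} is the direct translation of \eqref{4conc} via \eqref{53} after dividing out the common factor $m$, the four sign-patterns in the corners matching one-to-one.

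The main obstacle I foresee is not really conceptual but rather the careful tracking of the admissible domain. For each of the four sign choices in \eqref{Bform} one has to check that $\bigl((x\pm dx)/(\la\pm d\la),\,y\pm dy,\,(f\pm d\la)/(\la\pm d\la)\bigr)\in G$, which is automatic for small enough increments at an interior base point but has to be verified at the boundary of $G$ (where $\beta=1$, $\beta=Q$, or $|\gamma|=\al$). Similarly, reconciling the interval $|\gamma|/\al\le t\le Q/\beta$ in \eqref{Bt} with the constraints $m\le w\le Qm$ and $F\ge |f|m$ coming from \eqref{O5} is a small but genuine verification. Beyond these technicalities, nothing substantial stands in the way: all three conclusions are transcriptions of the already-established properties of $\B$ through the scaling \eqref{53}.
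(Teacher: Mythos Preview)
Your proposal is correct and follows the same route as the paper: the paper's proof is a single sentence stating that \eqref{Bt}, \eqref{BVYP}, and \eqref{Bform} follow from Theorem~\ref{tudaDM}, Theorem~\ref{tudaVYP}, and \eqref{4conc} respectively, and your write-up simply unpacks these three citations through the homogeneity relation \eqref{53} exactly as intended. Your additional remarks about domain-checking and about upgrading midpoint concavity to full concavity are sound but not strictly needed here, since the paper immediately mollifies $B$ afterward.
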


\begin{proof}
These relations follow from Theorem \ref{tudaDM}, Theorem \ref{tudaVYP}, and Theorem \ref{tudaweight} (actually from \eqref{4conc}) correspondingly.
\end{proof}

We can choose extremely small $\e_0$ and inside the domain $\Om$ we can mollify $\B$ by a convolution of it with $\e_0$-bell function $\psi$ supported in a ball of radius $\e_0/10$.

Multiplicative convolution can be viewed as the integration with $\frac{1}{\delta^5}\psi(\frac{x-x_0}{\delta})$, where $\delta=\e_0/10$. Here $x_0$ is a point inside the domain of definition $\Om$ for function $\B$.

This new function we call $\B$  again. It is exactly as the initial function $\B$, and it obviously satisfies all the same relationships, in particular it satisfies Theorems \ref{tudaweight}, \ref{tudaVYP}, \ref{tudaDM}. Only its domain of definition$\Om_{\e_0}$ is smaller (slightly) than $\Om$. The advantage however is that the new $\B$ is smooth. We build $B$ by this new $\B$. A new function $B$ defined by the new  $\B$ as in \eqref{53} will be smooth.  Actually the new $B$ should be denoted $\bb$, where superscript denotes our operation of mollification, but we drop the superscript for the sake of brevity. In fact, all these mollifications are for the sake of convenience, the new functions satisfy the old inequalities in the uniform way, independently of $\e_0$. Property \eqref{Bform} can be now rewritten  by the use of Taylor's formula:

\begin{thm}
\label{Bdiffform}
$$
-\al^2 B_{\al\al}\bigg(\frac{dx}{x}-\frac{d\la}{\la}\bigg)^2 -\beta^2 B_{\beta\beta} \Bigg(\frac{dy}{y}\bigg)^2 -(1+\gamma^2)B_{\gamma\gamma} \Bigg(\frac{d\la}{\la}\bigg)^2-
$$
$$
-2\al\beta B_{\al\beta}\bigg(\frac{dx}{x}-\frac{d\la}{\la}\bigg)\frac{dy}{y} + 2\beta\gamma B_{\beta\gamma} \frac{dy}{y}\frac{d\la}{\la} + 2\al\gamma B_{\al\gamma}\bigg(\frac{dx}{x}-\frac{d\la}{\la}\bigg)\frac{d\la}{\la}+
$$
$$
+2\al B_{\al}\bigg(\frac{dx}{x}-\frac{d\la}{\la}\bigg)\frac{d\la}{\la} -2\gamma B_{\gamma}\bigg(\frac{d\la}{\la}\bigg)^2 \ge 0\,.
$$
\end{thm}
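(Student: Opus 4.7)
\emph{Proof proposal.} The plan is to obtain this inequality as the second-order Taylor expansion of \eqref{Bform} in the small parameters $dx$, $dy$, $d\la$. The mollification of $\B$ performed immediately before the statement ensures that $B$ is smooth on an open subset of its domain, so Taylor expansion is justified and the resulting quadratic-form inequality continues to hold uniformly in the mollification scale $\e_0$.

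First I would parametrize the four points appearing in \eqref{Bform} by a pair of signs $(\eps_1,\eps_2)\in\{\pm 1\}^2$: the sign $\eps_1$ controls the tied perturbations $x\to x+\eps_1 dx$, $\la\to\la+\eps_1 d\la$, $y\to y+\eps_1 dy$, while $\eps_2$ controls the sign of $d\la$ in the numerator $f+\eps_2 d\la$. Writing $a:=dx/x$, $b:=dy/y$, $c:=d\la/\la$ and expanding the quotients $(1+\eps_1 a)/(1+\eps_1 c)$ and $(\gamma+\eps_2 c)/(1+\eps_1 c)$ to second order in $c$, the three arguments $(\al,\beta,\gamma):=(x/\la,\,y,\,f/\la)$ of $B$ acquire the increments
\begin{align*}
\Delta\al&=\eps_1\al(a-c)+\al(c^2-ac)+O(3),\\
\Delta\beta&=\eps_1\beta b,\\
\Delta\gamma&=c(\eps_2-\eps_1\gamma)+c^2(\gamma-\eps_1\eps_2)+O(3).
\end{align*}

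Next I would Taylor expand $B(\al+\Delta\al,\beta+\Delta\beta,\gamma+\Delta\gamma)$ to second order, sum over the four sign choices, and divide by four. In the first-order piece the $\eps_1$-linear parts of $\Delta\al,\Delta\beta,\Delta\gamma$ together with the $\eps_1\eps_2$ part of $\Delta\gamma$ average to zero, so only the $c^2$ corrections survive, contributing $B_\al\,\al\, c(c-a)+B_\gamma\gamma c^2$. For the quadratic piece I use $\operatorname{avg}(\eps_i)=0$, $\operatorname{avg}(\eps_i^2)=1$, $\operatorname{avg}(\eps_1\eps_2)=0$ to read off
$\operatorname{avg}(\Delta\al^2)=\al^2(a-c)^2$, $\operatorname{avg}(\Delta\beta^2)=\beta^2 b^2$, $\operatorname{avg}(\Delta\gamma^2)=c^2(1+\gamma^2)$, $\operatorname{avg}(\Delta\al\,\Delta\beta)=\al\beta(a-c)b$, $\operatorname{avg}(\Delta\al\,\Delta\gamma)=-\al\gamma c(a-c)$, and $\operatorname{avg}(\Delta\beta\,\Delta\gamma)=-\beta\gamma bc$. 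Substituting everything into \eqref{Bform}, multiplying through by $2$, and identifying $a-c=dx/x-d\la/\la$ reproduces the inequality in the theorem term by term.

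The only point requiring attention is the expansion of $\Delta\gamma$: because its numerator depends on $\eps_2$ while its denominator depends on $\eps_1$, the cross products $\Delta\al\,\Delta\gamma$ and $\Delta\beta\,\Delta\gamma$ carry an $\eps_1\eps_2$ factor that averages to zero but leaves behind an additional $-\gamma$. This is exactly what produces the $+2\al\gamma B_{\al\gamma}$ and $+2\beta\gamma B_{\beta\gamma}$ signs in the conclusion, as well as the $-2\gamma B_\gamma c^2$ surviving gradient term. Once this single bookkeeping observation is in place, the proof reduces to collecting coefficients from the discrete averaging identities.
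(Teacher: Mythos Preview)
Your proposal is correct and is exactly the approach the paper takes: the paper's proof consists of the single sentence ``This is just Taylor's formula applied to \eqref{Bform},'' and you have carried out that Taylor expansion in full detail, with the sign/averaging bookkeeping done correctly. The only remark is that your ``multiply through by $2$'' is precisely what matches the coefficients $2\al B_\al$ and $-2\gamma B_\gamma$ in the gradient terms of the statement, so nothing further is needed.
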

\begin{proof}
This is just Taylor's formula applied to \eqref{Bform}.
\end{proof}

Denoting
$$
\xi=\frac{dx}{x}=\frac{dy}{y}\,,\,\,\eta=\frac{d\la}{\la}
$$
we obtain the following quadratic form inequality
\begin{thm}
\label{xieta}
$$
-\xi^2 \,[\al^2 B_{\al\al} +\beta^2 B_{\beta\beta} + 2\al\beta B_{\al\beta}] -\eta^2\, [\al^2 B_{\al\al} + (1+\gamma^2) B_{\gamma\gamma}  + 2\al\gamma B_{\al\gamma} +2\al B_{\al} +2\gamma B_{\gamma}] +
$$
$$
+2\xi\eta \,[\al^2 B_{\al\al} +\al\beta B_{\al\beta}+ \beta\gamma B_{\beta\gamma} +\al\gamma B_{\al\gamma} +\al B_{\al}] \ge 0\,.
$$
\end{thm}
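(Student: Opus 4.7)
The proof is essentially bookkeeping: the statement of Theorem \ref{xieta} is what Theorem \ref{Bdiffform} becomes once we specialize to the linear subspace of increments where the first two logarithmic derivatives coincide.

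The plan is as follows. Starting from Theorem \ref{Bdiffform}, set
\[
\xi := \frac{dx}{x}=\frac{dy}{y},\qquad \eta := \frac{d\la}{\la}.
\]
This is a legitimate substitution because $dx,dy,d\la$ in Theorem \ref{Bdiffform} are independent small parameters (coming from the four arbitrary sign choices in \eqref{Bform}), so we are allowed to restrict our attention to any one-parameter family, in particular the family $dx/x = dy/y$. Under this substitution the five building blocks appearing in Theorem \ref{Bdiffform} become
\[
\Bigl(\frac{dx}{x}-\frac{d\la}{\la}\Bigr)^2=(\xi-\eta)^2,\quad \Bigl(\frac{dy}{y}\Bigr)^2=\xi^2,\quad \Bigl(\frac{d\la}{\la}\Bigr)^2=\eta^2,
\]
\[
\Bigl(\frac{dx}{x}-\frac{d\la}{\la}\Bigr)\frac{dy}{y}=\xi^2-\xi\eta,\quad \frac{dy}{y}\frac{d\la}{\la}=\xi\eta,\quad \Bigl(\frac{dx}{x}-\frac{d\la}{\la}\Bigr)\frac{d\la}{\la}=\xi\eta-\eta^2.
\]

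Next I would expand every bracket in Theorem \ref{Bdiffform} using the list above and collect powers of $\xi$ and $\eta$. The coefficient of $\xi^2$ collects contributions only from the three terms $-\al^2 B_{\al\al}(\xi-\eta)^2$, $-\beta^2 B_{\beta\beta}\xi^2$ and $-2\al\beta B_{\al\beta}(\xi^2-\xi\eta)$, giving $-[\al^2 B_{\al\al}+\beta^2 B_{\beta\beta}+2\al\beta B_{\al\beta}]$. The coefficient of $\eta^2$ collects the remaining pure-$\eta^2$ contributions from $-\al^2 B_{\al\al}(\xi-\eta)^2$, $-(1+\gamma^2)B_{\gamma\gamma}\eta^2$, $+2\al\gamma B_{\al\gamma}(\xi\eta-\eta^2)$, $+2\al B_{\al}(\xi\eta-\eta^2)$ and $-2\gamma B_{\gamma}\eta^2$, producing $-[\al^2 B_{\al\al}+(1+\gamma^2)B_{\gamma\gamma}+2\al\gamma B_{\al\gamma}+2\al B_{\al}+2\gamma B_{\gamma}]$. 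The cross-term $2\xi\eta$ picks up the summands $+2\al^2 B_{\al\al}\xi\eta$ from $-\al^2 B_{\al\al}(\xi-\eta)^2$, $+2\al\beta B_{\al\beta}\xi\eta$ from $-2\al\beta B_{\al\beta}(\xi^2-\xi\eta)$, and $+2\beta\gamma B_{\beta\gamma}\xi\eta,\ +2\al\gamma B_{\al\gamma}\xi\eta,\ +2\al B_{\al}\xi\eta$ from the three remaining mixed terms, which after dividing by $2$ gives precisely the bracket $[\al^2 B_{\al\al}+\al\beta B_{\al\beta}+\beta\gamma B_{\beta\gamma}+\al\gamma B_{\al\gamma}+\al B_{\al}]$ claimed in the theorem.

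There is no analytic difficulty here; the only thing that can go wrong is an arithmetic slip when accounting for the $\eta^2$ contribution hidden inside $(\xi-\eta)^2$ and inside the three mixed differentials $(\xi-\eta)\xi$ and $(\xi-\eta)\eta$. Once these are carefully counted, the desired inequality follows immediately from Theorem \ref{Bdiffform}, which itself is guaranteed by Theorem \ref{bigform}\,\eqref{Bform} via Taylor's formula.
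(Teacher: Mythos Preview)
Your proof is correct and follows exactly the route the paper takes: the paper simply writes ``Denoting $\xi=\frac{dx}{x}=\frac{dy}{y}$, $\eta=\frac{d\la}{\la}$ we obtain the following quadratic form inequality'' and then states Theorem~\ref{xieta}, leaving the substitution and regrouping implicit; you have spelled out precisely that computation, and the coefficients of $\xi^2$, $\eta^2$, $2\xi\eta$ are accounted for correctly.
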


Now let us combine Theorem \ref{xieta} and Theorem \ref{tudaVYP}.
In fact, Theorem \ref{tudaVYP} implies
$$
-2\al\gamma B_{\al\gamma} \eta^2 \le -\al^2\gamma B_{\al\al}\eta^2 -\gamma B_{\gamma\gamma} \eta^2\,.
$$
We plug it into the second term above. Also Theorem \ref{tudaVYP} implies
$$
2\al\gamma B_{\al\gamma}\xi\eta   \le -\al^2\gamma B_{\al\al} \xi^2 - \gamma B_{\gamma\gamma} \eta^2\,,
$$
$$
2\beta\gamma B_{\beta\gamma}\xi\eta   \le -\beta^2\gamma B_{\beta\beta} \xi^2 - \gamma B_{\gamma\gamma} \eta^2\,,
$$
We will plug it into the third term above. Then using the notation
$$
\psi(\al, \beta,\gamma) := -\al^2 B_{\al\al}-2\al\beta B_{\al\beta} -\beta^2 B_{\beta\beta}
$$
(which is non-negative by the concavity of $B$ in its first two variables by the way)
we introduce the notations
$$
K:= \psi(\al,\beta,\gamma) + (-\al^2 B_{\al\al}-\beta^2B_{\beta\beta})\gamma\,,
$$
$$
L:=-\psi(\al,\beta,\gamma) +(\al^2B_{\al})_{\al} -\beta^2B_{\beta\beta}\,,
$$
$$
N:=-(1+3\gamma+\gamma^2) B_{\gamma\gamma} - 2\gamma B_{\gamma} -(\al^2B_{\al})_{\al} -\al^2B_{\al\al}\gamma\,.
$$
And we get that the following quadratic form is non-negative:
$$
\xi^2\, K+\xi\eta\,L +\eta^2\,N:=
$$
$$
\xi^2\,[ \psi(\al,\beta,\gamma) + (-\al^2 B_{\al\al}-\beta^2B_{\beta\beta})\gamma]+
$$
$$
\xi\eta\, [ -\psi(\al,\beta,\gamma) +(\al^2B_{\al})_{\al} -\beta^2B_{\beta\beta}]+
$$
$$
\eta^2\, [-(1+3\gamma+\gamma^2) B_{\gamma\gamma} - 2\gamma B_{\gamma} -(\al^2B_{\al})_{\al} -\al^2B_{\al\al}\gamma] \ge 0\,.
$$
Therefore,  $K$ is positive, and
\begin{equation}
\label{KLN}
N\ge \frac{L^2}{4K}\,.
\end{equation}

Now we will estimate $L$ from below, $K$ from above  and as a result we will obtain the estimate of $N$ from below, which will bring us our proof.

But first we need some a priori estimates, and for that we will need to mollify $B=\bb$ in variables $\al, \beta$. Again we make a multiplicative convolution with a bell-type function. Let us explain why we need it.
Let
$$
\hat{Q}:=\sup_G B/\al\,.
$$
We want to prove that
\begin{equation}
\label{hatQ}
\hat{Q}/Q\rightarrow \infty\,.
\end{equation}
First we need to notice that
\begin{equation}
\label{inavpsi}
\int_{1/2}^1 \psi(\al t,\beta t, \gamma)\,dt \le C\,(\hat{Q}\gamma +\frac{\hat{Q}}{Q}\al), \,\,
\psi(\al, \beta,\gamma) := -\al^2 B_{\al\al}-2\al\beta B_{\al\beta} -\beta^2 B_{\beta\beta}\,.
\end{equation}

In fact, consider $\beta\in [Q/4, Q/2]$, $b(t):= B(\al t, \beta t, \gamma)$ on the interval $\frac{|\gamma|}{\al}=:t_0 \le t\le 1$. Let $\ell(t)=b(1)t\le \hat{Q}t\alpha$.
We saw that $b(t)/t$ is increasing and $b$ is concave, and $b$ is under $\ell$, and so by elementary picture of concave function having property $b(\cdot)/\cdot$ increasing and $b(\cdot)$ concave on the interval $[t_0',1]$ we get that the maximum  of $\ell(\cdot)- b(\cdot)$ is attained on the left end-point. The left end-point $t_0'$ is the maximum of $t_0=|\gamma |/\al$ and $1/\beta$ which is $c/Q$. Therefore,
$$
\ell(t)-b(t)|(t=(\max(\frac{\gamma}{\al},\frac{c}{Q})) \le\ell(\max( \frac{\gamma}{\al},\frac{c}{Q}))\le C\hat{Q}\al\max(\frac{\gamma}{\al}, \frac{1}{Q})\le \hat{Q}\gamma +\frac{\hat{Q}}{Q}\al\,,
$$
and the above value is maximum of $g(t):=\ell(t)-b(t)$ on $[t_0',1]$.  By the same property that $b(t)/t$ is increasing we get that
$$
g'(1)=\ell'(1)-b'(1)= b(1) - b'(1)\le 0\,.
$$
Combining this with Taylor's formula on $[t_0,1]$ we get for $g:=\ell-b$ (g is convex of course):
\begin{equation}
\label{d2g}
-(1-t_0)g'(1) +\int_{t_0}^1 dt \int_{t}^1 g''(s) ds = \text{positive} + \int_{t_0}^1 (s-t_0) g''(s) ds \le \sup g  \le \hat{Q}\gamma +\frac{\hat{Q}}{Q}\al\,.
\end{equation}
This implies \eqref{inavpsi} because $g''(t) = \frac1{t^2} \psi(\al t, \beta t, \gamma), \, t\in [1/2,1]$.

Consider now function $ a(t):= B(\al t, \beta, \gamma)$
We also have the same type of consideration applied to convex function $\hat{Q}\alpha-a(t)$ bringing us

\begin{equation}
\label{inaval}
\int_{1/2}^1 -\al^2 B_{\al\al} (\al t,\beta,\gamma)\,dt \le C\hat{Q}\al\,.
\end{equation}

Similarly,

\begin{equation}
\label{inavbeta}
\int_{1/2}^1 -\beta^2 B_{\beta\beta} (\al ,\beta t,\gamma)\,dt \le C\hat{Q}\al\,.
\end{equation}

We  used here that $B_{\al}\ge 0, B_{\beta}\ge 0$, which is not difficult to see.

For the future estimates we want \eqref{inavpsi}, \eqref{inaval}, \eqref{inavbeta} to hold not in average but pointwise.

To achieve the replacement of ``in-average" estimates \eqref{inavpsi}, \eqref{inaval}, \eqref{inavbeta} by their pointwise analogs let us
consider yet another mollification, now it is of $B$:
$$
B_{new}(\al, \beta, \gamma):= 2\int_{1/2}^1 B(\al t, \beta t, \gamma)\, dt.
$$
The domain of definition of $B_{new}$ is only in tiny difference with the domain of definition of $B$. In fact, the latter is $\{(\al, \beta, \gamma):\, |\gamma|\le \al, 1\le \beta\le Q\}$, and the former is just $G:=\{(\al, \beta, \gamma):\, |\gamma|\le \frac12\al, 2\le \beta\le Q\}$. 

If we replace $(\al, \beta, \gamma)$ by $(\al t, \beta t, \gamma), 1/2\le t \le 1,$ everywhere in the inequality of Theorem \ref{xieta}, and then integrate
the inequality with $2\int_{1/2}^1\dots \,dt$, we will get Theorem \ref{xieta} but for $B_{new}$.

\bigskip 

It is not difficult to see that \eqref{inavpsi}  becomes a pointwise estimate for $B_{new}$ (just differentiate the formula for $B_{new}$ in $\al, \beta, \gamma$ and multiply by $\al, \beta,\gamma$ appropriately):
\begin{equation}
\label{navpsiPT}
-\al^2 (B_{new})_{\al\al} - 2\al\beta (B_{new})_{\al\beta} - \beta^2 (B_{new})_{\beta\beta} \le C(\hat Q \gamma+ \frac{\hat Q}{Q}\al).
\end{equation}
This pointwise estimate automatically imply new ``average" estimate:
$$
2\int_{1/2}^1 \big(-\al^2s^2 (B_{new})_{\al\al}(\al s, \beta, \gamma) - 2\al s\beta (B_{new})_{\al\beta}(., \beta,.) - \beta^2 (B_{new})_{\beta\beta}\big) \le C(\hat Q \gamma+ \frac{\hat Q}{Q}\al).
$$
This means exactly that the function
$$
\tilde B:=(B_{new})_{new} := 2\int_{1/2}^1 B(\al s, \beta, \gamma)\, ds
$$ 
still satisfies \eqref{navpsiPT}.  It also clearly satisfies the inequality of  Theorem \ref{xieta} because (as we noticed above) $B_{new}$ satisfies this inequality. To see this fact just replace all $\al$'s in  the inequality of  Theorem \ref{xieta}  applied to $B_{new}$ by $\al s$ and integrate $2\int_{1/2}^1\dots\,ds$. 

Now let us see that $\tilde B=(B_{new})_{new} $ also  satisfies a pointwise analog of \eqref{inaval}, namely, that
\begin{equation}
\label{navalPT}
-\al^2 \tilde B_{\al\al}(\al, \beta, \gamma) \le C\hat Q\al\,.
\end{equation}
To show \eqref{navalPT} we just repeat what has been done above. Let $\tilde g(t):= \hat Q\al -  B_{new}(\al t, \beta, \gamma)$. Then we have: 1) $0\le \tilde g \le \hat Q\al$ on $[t_0, 1]$, 2) $\tilde g'(1) \le 0$ (we saw that $B$, and hence $B_{new}$, are increasing in the first argument), 3) $\tilde g$ is convex. Then we saw in \eqref{d2g} that
$$
\int_{1/2}^1 s^2\,\tilde g''(s) \,ds \le\int_{1/2}^1 \tilde g''(s) \,ds \le C\hat Q\al.
$$
But this is exactly \eqref{navalPT}.  

So far we constructed a function $\tilde B= (B_{new})_{new}$ that satisfies pointwise inequalities \eqref{navpsiPT}, \eqref{navalPT} and the inequality of Theorem \ref{xieta}. We are left to see that by introducing
$$
\hat B:= 2\int_{1/2}^1 \tilde B(\al , \beta s, \gamma)\, ds
$$
we  keep \eqref{navpsiPT}, \eqref{navalPT} and the inequality of Theorem \ref{xieta} valid and also ensure
\begin{equation}
\label{navbetaPT}
-\beta^2 \hat B_{\beta\beta}(\al, \beta, \gamma) \le C\hat Q\al\,.
\end{equation}

W already just saw that \eqref{navpsiPT}, \eqref{navalPT} and the inequality of Theorem \ref{xieta} are valid for $\hat B$ just by averaging the same inequalities for  $\tilde B$. We can see that \eqref{navbetaPT} holds by the repetition of what has been just done. Namely, consider  $\hat g(t):= \hat Q\al -  \tilde B(\al, \beta t, \gamma)$. Then we have: 1) $0\le \hat g \le \hat Q\al$ on $[t_0, 1]$, 2) $\hat g'(1) \le 0$ (we saw that $B$, and hence $B_{new}$, $\tilde B$ are increasing in the first argument), 3) $\hat g$ is convex. Using \eqref{d2g} again in exactly the same manner as we did with proving \eqref{navalPT} we get 
$$
\int_{1/2}^1 s^2\,\hat g''(s) \,ds \le\int_{1/2}^1 \hat g''(s) \,ds \le C\hat Q\al.
$$
But this is exactly \eqref{navbetaPT}.  

\bigskip

We drop ``hat", and from now on $\hat B$ is just denoted by $B$. We can summarize its properties as follows.

\begin{equation}
\label{psi}
0\le \psi(\al ,\beta , \gamma) \le C(\hat{Q}\gamma +\frac{\hat{Q}}{Q}\al)\,.
\end{equation}
\begin{equation}
\label{al}
0\le - \al^2 B_{\al\al} (\al ,\beta,\gamma)\le C\hat{Q}\al\,.
\end{equation}
\begin{equation}
\label{beta}
0\le - \beta^2 B_{\beta\beta} (\al ,\beta ,\gamma) \le C\hat{Q}\al\,.
\end{equation}


Recall that (now with this mollified $B$):
$$
\xi^2\, K+\xi\eta\,L +\eta^2\,N:=
$$
$$
\xi^2\,[ \psi(\al,\beta,\gamma) + (-\al^2 B_{\al\al}-\beta^2B_{\beta\beta})\gamma]
$$
$$
\xi\eta\, [ -\psi(\al,\beta,\gamma) +(\al^2B_{\al})_{\al} -\beta^2B_{\beta\beta}]
$$
$$
\eta^2\, [-(1+3\gamma+\gamma^2) B_{\gamma\gamma} - 2\gamma B_{\gamma} -(\al^2B_{\al})_{\al} -\al^2B_{\al\al}\gamma] \ge 0\,.
$$

We will choose soon appropriate $\al_0, \al_1\le \frac1{100}\al_0$ and $\gamma\le \tau\al_0$ with some small  $\tau$. Let us introduce
$$
k:=\int_{\al_1}^{\al_0} K\, d\al = \int_{\al_1}^{\al_0}[ \psi(\al,\beta,\gamma) + (-\al^2 B_{\al\al}-\beta^2B_{\beta\beta})\gamma]\,d\al\,,
$$
$$
n:=\int_{\al_1}^{\al_0}  N\,d\al = \int_{\al_1}^{\al_0} [-(1+3\gamma+\gamma^2) B_{\gamma\gamma} - 2\gamma B_{\gamma} -(\al^2B_{\al})_{\al} -\al^2B_{\al\al}\gamma]\,d\al\,,
$$
$$
\ell:=\int_{\al_1}^{\al_0} [ -\psi(\al,\beta,\gamma) +(\al^2B_{\al})_{\al} -\beta^2B_{\beta\beta}]\,d\al\,.
$$

\noindent{\bf Estimate of $k$ from above.} The integrand of $k$ is obviously positive and $\psi$ term dominates other terms (by \eqref{psi}, \eqref{al}, \eqref{beta} and the smallness of $\gamma$).
Therefore,
\begin{equation}
\label{k}
0\le k\le C_1\,  (\hat{Q}\gamma\al_0 + C\frac{\hat{Q}}{Q}\al_0^2) + C_2\,\hat{Q}\gamma\al_0^2\le C\,  (\hat{Q}\gamma\al_0 + C\frac{\hat{Q}}{Q}\al_0^2)\,,
\end{equation}
if $Q$ is very large.
We choose (we are sorry for a strange way of writing $\al_0$, why we do that will be seen in the next section)
\begin{equation}
\label{chooseal}
\al_0= c\,\bigg(\frac{Q}{\hat{Q}}\bigg)^{\rho}\,,\,\rho=1\,,\,\al_1= \frac1{100}\sqrt{ \frac{Q}{\hat{Q}}}\al_0\,.
\end{equation}
Here $c$ is a small positive constant. We also choose to have $\gamma$ running only on the following interval
\begin{equation}
\label{g}
\gamma\in [0,\gamma_0]\,,\,\, \gamma_0:= \tau \bigg(\frac{Q}{\hat{Q}}\bigg)^{\rho} \al_0\,,\,\rho=1\,,
\end{equation}
where $\tau$ is a small positive constant.

\medskip

\noindent{\bf Estimate of $\ell$ from below.}  Estimating from below we can skip the non-negative term $ -\beta^2B_{\beta\beta}$. Also
$$
\int_{\al_1}^{\al_0} -\psi(\al,\beta,\gamma)\ge -C\hat{Q}\gamma\al_0 - C\frac{\hat{Q}}{Q}\al_0^2\,.
$$
On the other hand,
$$
\int_{\al_1}^{\al_0}(\al^2B_{\al})_{\al} \,d\al \ge \al_0^2 B_{\al}(\al_0,\beta, \gamma) -\al_1^2 \hat{Q} \,,
$$
as mollification gives a pointwise estimate
\begin{equation}
\label{Bal}
 B_{\al} \le C\hat{Q}\,.
 \end{equation}

 Recall that $\beta\in [Q/4, Q/2]$. We also will prove soon the obstacle condition \eqref{againobst}, which says that

 \begin{equation}
 \label{boundary}
 B(1,\beta, \gamma)\ge \frac{\beta}{8}\,.
 \end{equation}

  If $B_{\al}(\al_0, \beta, \gamma)$ would be smaller than $Q/40$ (and then $B_{\al}(s, \beta, \gamma)\le Q/40$ for all $s\in [\al_0,1]$ by concavity of $B$ in its first variable) we would not be able to reach at least  $\frac{Q}{4\cdot 8}$.  In fact, by our choice of $\al_0$ in \eqref{chooseal} we have
  \begin{equation}
  \label{Bal0}
  B(\al_0, \beta, \gamma) \le \hat{Q}\al_0\le c \,Q\,.
  \end{equation}
  If $B_{\al}(\al_0, \beta, \gamma) \le \frac{Q}{40}$, and so this derivative $B_{\al}(s, \beta, \gamma) \le  \frac{Q}{40}$ on $s\in [\al_0,1]$ (concavity), we cannot reach $Q/(4\cdot 8)$ for $s=1$ if we start with value of $B$ in \eqref{Bal0} at $s=\al_0$. But the fact that we cannot reach $Q/(4\cdot 8)$  contradicts to \eqref{boundary}.
  Therefore,
  \begin{equation}
  \label{Bal0b}
  B_{\al}(\al_0, \beta, \gamma) \ge \frac{Q}{40}\,,
  \end{equation}
  and
  \begin{equation}
  \label{elldominate}
 \ell\ge  \frac{\al_0^2}{40} Q -\al_1^2 \hat{Q} -C\,\hat{Q} \gamma\al_0-C\frac{\hat{Q}}{Q}\al_0^2\,.
 \end{equation}

As $\al_1=\frac1{100}\al_0 \sqrt{\frac{Q}{\hat{Q}}}$ (see \eqref{chooseal}), the second term is dominated by the first; the third term is dominated by the first because of the choice of $\gamma_0$ in \eqref{g}, the fourth term is dominated by the first one because $Q^2>>\hat{Q}$, see \cite {P} for a much better estimate.


\bigskip

Finally,
\begin{equation}
\label{ellbelow}
\ell\ge \frac{\al_0^2}{80} Q\ge c\,\al_0^2\,Q\,.
\end{equation}
And $k$ is
$$
0\le k  \le C\,  (\hat{Q}\gamma\al_0 + C\frac{\hat{Q}}{Q}\al_0^2)=\al_0 \hat{Q} \,(\gamma +\frac1Q \al_0)\,.
$$
We got
\begin{equation}
\label{nbelow}
n \ge \frac{\ell^2}{4k}\ge c\,\frac{\al_0^4 Q^2}{\al_0 \hat{Q} \,(\gamma +\frac1Q \al_0)}\,.
\end{equation}

\medskip

\noindent{\bf Estimate of $n$ from above.}  By \eqref{Bal0b}, \eqref{Bal} and \eqref{al} we get

$$
\int_{\al_1}^{\al_0} -(\al^2 B_{\al})_{\al}\, d \al -\gamma\,\int_{\al_1}^{\al_0} \al^2 B_{\al\al}\, d\al \le - c Q\al_0^2 + C\hat{Q}\al_1^2+ c\hat{Q}\al_0^2\gamma \le 0\,.
$$
Negativity is by the choice of $\al_1$ in \eqref{chooseal} and by the fact that
\begin{equation}
\label{glesq}
\gamma \le c\, \sqrt{\frac{Q}{\hat{Q}}}\,,
\end{equation}
which is much overdone in \eqref{g}.

Therefore, we get, combining with \eqref{nbelow} (here $\eta>$ is an absolute constant and  it is at least the maximum of all our $3\gamma+\gamma^2$)
$$
c\,\frac{\al_0^3 Q^2}{ \hat{Q} \,(\gamma +\frac1Q \al_0)} \le  n\le -(1+\eta)\int_{\al_1}^{\al_0} (e^{\frac1{1+\eta}\gamma^2} B_{\gamma})_{\gamma}\,d\al\,,
$$
or
\begin{equation}
\label{Bgamma}
\int_{\al_1}^{\al_0} (-e^{\frac1{1+\eta}\gamma^2} B_{\gamma})_{\gamma}\,d\al \ge   C\,\frac{\al_0^3 Q^3}{ \hat{Q} \,(Q\gamma +\al_0)} \,.
\end{equation}

Function $B$ is smooth, concave in $\gamma$ and symmetric in $\gamma$ (the latter is by definition). In particular $B_\gamma(\alpha, \beta, 0) =0$. So after integrating in $\gamma$ on $[0, \gamma], \gamma<\gamma_0$ we get
\begin{equation}
\label{BgammaInt1}
\int_{\al_1}^{\al_0} ( -B_{\gamma})\,d\al \ge  C\,\al_0^3 \frac{Q^2}{\hat{Q}}[\log (\al_0+ Q\gamma) -\log \al_0]=  C\,\al_0^3 \frac{Q^2}{\hat{Q}}\,\log (1+ \frac{Q}{\al_0}\gamma) \,.
\end{equation}

Integrate again in $\gamma$ on $[0, \gamma_0]$. We get the integral over $[\al_1,\al_0]$ of the oscillation of $B$, which is
$$
\int_{\al_1}^{\al_0} [B(\al,\beta, 0) - B(\al,\beta, \gamma_0)]\,d\al \ge C\, \al_0^3 \frac{Q^2}{\hat{Q} } \cdot \frac{\al_0}{Q} (1+ Q\frac{\gamma_0}{\al_0})\log (1+ Q\frac{\gamma_0}{\al_0})\,.
$$
But this oscillation is smaller than $C\hat{Q}\al_0^2$. We get the inequality
\begin{equation}
\label{QhatQ}
C\, \al_0^4 \frac{Q}{\hat{Q} } \,(1+ Q\frac{\gamma_0}{\al_0})\log (1+ Q\frac{\gamma_0}{\al_0}) \le \al_0^2\hat{Q}\,.
\end{equation}

\bigskip

Notice that $\al_0$, $\gamma_0$, $\gamma_0/\al_0$ are all powers of $\frac{Q}{\hat{Q}}$, which we expect to be a sort of $\frac1{(\log Q)^{p}}$.

Then we get the estimate in terms of {\it powers} of $\frac{Q}{\hat{Q}}$:
\begin{equation}
\label{QhatQ1}
 C\, \al_0^2 \frac{Q^2}{\hat{Q}^2 } \frac{\gamma_0}{\al_0}\log (1+ Q\frac{\gamma_0}{\al_0}) \le 1\,.
\end{equation}

\bigskip

Let us count the powers of $\frac{Q}{\hat{Q}}$: $\al_0^2$ brings power $2$---by \eqref{chooseal}, $\frac{\gamma_0}{\al_0}$ brings power $1$ by \eqref{g}, so totally we have
$\frac1{(\log Q)^{5p}}\log \frac{Q}{(\log Q)^{\dots}}$ in the left hand side.

\bigskip

We can see that if $\hat{Q}\le Q\log^{p} Q$ with $p <\frac1{5}$, then \eqref{QhatQ1} leads to a contradiction. So we proved
\begin{thm}
\label{onethird}
The weighted weak norm of the martingale transform  for weights $w\in A_1^{dyadic}$ can reach $c\, [w]_{A_1}\log^{p} [w]_{A_1}$ for any positive $p<1/5$.
\end{thm}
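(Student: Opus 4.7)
My plan is to deduce Theorem~\ref{onethird} as an immediate consequence of inequality~\eqref{QhatQ1}, arguing by contradiction. Concretely, I would assume that for every dyadic $A_1$ weight with sufficiently large $[w]_{A_1} = Q$ the weak $L^1(w) \to L^{1,\infty}(w)$ norm of the martingale transform satisfies $\hat Q \le Q\log^p Q$ for some fixed $p < 1/5$, and I would show that \eqref{QhatQ1} is violated once $Q$ is large enough.

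The first step is simply to track the powers of $Q/\hat Q$ on the left-hand side of
\begin{equation*}
C\,\al_0^2\,\frac{Q^2}{\hat{Q}^2}\,\frac{\gamma_0}{\al_0}\,\log\!\left(1 + Q\,\frac{\gamma_0}{\al_0}\right) \le 1,
\end{equation*}
using the prescribed scales $\al_0 = c\,(Q/\hat Q)$ from \eqref{chooseal} and $\gamma_0 = \tau\,(Q/\hat Q)\,\al_0$ from \eqref{g}. This gives $\al_0^2 \asymp (Q/\hat Q)^2$, $\gamma_0/\al_0 = \tau\,(Q/\hat Q)$, and $Q\,\gamma_0/\al_0 \asymp Q^2/\hat Q$, so the three polynomial factors combine to $(Q/\hat Q)^{2+2+1} = (Q/\hat Q)^5$ and the logarithmic factor becomes $\log(1 + c\,Q^2/\hat Q)$. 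Hence \eqref{QhatQ1} becomes
\begin{equation*}
c\,\bigl(Q/\hat Q\bigr)^{5}\,\log\!\bigl(1 + c\,Q^2/\hat Q\bigr) \le 1.
\end{equation*}

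In the second step I insert the standing assumption $\hat Q \le Q\log^p Q$, so that $Q/\hat Q \ge \log^{-p} Q$ and $Q^2/\hat Q \ge Q/\log^p Q$. Since $\log(1 + Q/\log^p Q) \ge \tfrac12 \log Q$ for $Q$ large, the left-hand side above is bounded below by $c\,\log^{1-5p} Q$. For $p < 1/5$ this tends to $+\infty$ with $Q$, contradicting the upper bound $1$. Therefore no such exponent $p < 1/5$ can be admissible, and the weak norm must in fact reach $c\,Q\log^{p} Q$ for every $p < 1/5$.

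The main obstacle is already behind us: it is concentrated in assembling \eqref{QhatQ1} itself, which required the bi-concavity relations of Theorem~\ref{xieta}, the three-fold mollification producing the pointwise a priori bounds \eqref{navpsiPT}--\eqref{navbetaPT}, the obstacle-type lower bound $B(1,\beta,\gamma) \ge \beta/8$ used to force \eqref{Bal0b}, and the Cauchy--Schwarz step $N \ge L^2/(4K)$ applied to the integrated quadratic form. The only point requiring care at this final stage is verifying that the chosen scales remain admissible, namely $\al_1 < \al_0 \le 1$ and $\gamma_0 \le \al_0/2$, which is automatic under $\hat Q \gtrsim Q$ for $Q$ large and with the smallness of the constants $c,\tau$.
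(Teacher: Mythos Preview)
Your argument is correct and is exactly the paper's own derivation: the proof of Theorem~\ref{onethird} in the paper consists precisely of counting the powers of $Q/\hat Q$ on the left of~\eqref{QhatQ1} (the explicit factor $Q^2/\hat Q^2$ together with $\al_0^2\asymp (Q/\hat Q)^2$ and $\gamma_0/\al_0\asymp Q/\hat Q$, total exponent $5$), substituting the hypothesis $\hat Q\le Q\log^pQ$, and observing the contradiction for $p<1/5$. Your remark on admissibility of the scales and your identification of $\hat Q$ with (a constant multiple of) the weak norm constant are also in line with the paper's setup.
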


\subsection{ A small improvement: from $1/5$ to $2/7$}
\label{5}

Suppose that we are allowed to transform the martingale not just  by $\e_J=\pm 1$ but by any $|\e_j|\le 1$ (it is not clear whether this is the same for weak norm estimate, probably yes). The change will give us that $d\la|\le |df|$, and this will mean in articular, that we automatically have that function $B(\al, \beta, \gamma)$ is concave in $\gamma$. We observed that it is symmetric in $\gamma$. Together this gives us that
\begin{equation}
\label{maxg}
B(\al, \beta, \gamma)\le B(\al, \beta, 0),\, |\gamma|\le \al;\,\,B(\al, \beta, \gamma) \ge B(\al, \beta, 0)/2,\,  |\gamma|\le \al/2\,.
\end{equation}

Now to improve the constant $1/5$ we consider $Q^+:= \sqrt{Q\hat Q}$. We put
$$
a_0:= c_1 \frac{Q}{Q^+}>> \al_0.
$$
Two cases appear:

\noindent Case 1.  $B(a_0, \beta, 0) \le Q^+ a_0$. Then we replace $\al_0$ by $a_0$ in \eqref{chooseal}, we replace $\gamma_0$ by $\tilde\gamma_0=a_0 \big(\frac{Q}{Q^+}\big)^{\rho}$ in \eqref{g}, and we got \eqref{Bal0} with $B(a_0, \beta, \gamma) \le cQ$ (we use \eqref{maxg} here). And a result we have (exactly by the same reasoning as above)
$B_\al(a_0, \beta, \gamma) \ge \frac{Q}{40}$. This the same as \eqref{Bal0b} but with $a_0$ instead $\al_0$.  Now the main bookkeeping inequality
\eqref{QhatQ1} with $a_0$ replacing of $\al_0$, $\tilde\gamma_0$ replacing $\gamma_0$, gives us a new $p= 2/7$.

\bigskip

\noindent Case 2. $B(a_0, \beta, 0) \ge Q^+ a_0$. Then $B(a_0, \beta, 0) \ge c_1Q$.  And by \eqref{maxg} $B(a_0, \beta, \gamma) \ge c_1'Q$ if $|\gamma|\le a_0/2$.
But we saw that $B(\al_0, \beta, \gamma) \le cQ$. Then between $\al_0$ and $a_0$ there is a point $\tilde\al_0$ such that 
$B_\al(\tilde\al_0, \beta, \gamma) \ge c_2Q/(c_1Q/Q^+- cQ/\hat Q) \ge c_3 Q^+$. Then by concavity
$B_\al(\al_0, \beta, \gamma) \ge c_3 Q^+$. This is exactly \eqref{Bal0b}, but with a bigger constant in the right hand side ($Q^+$ in place of $Q$).

Therefore we can repeat verbatim the whole body of estimates after \eqref{Bal0b} up to the main bookkeeping inequality \eqref{QhatQ1}. However, in \eqref{QhatQ1}  $Q^2$ in the numerator should be replaced by $(Q^+)^2$. Calculating $p$ we are able again reduce it to $2/7$.

\subsection{Obstacle conditions for $B$.}
\label{obstacle}

Now we want to show the following obstacle condition for $B$, which we already used:
\begin{equation}
\label{againobst}
\text{if}\,\,|\gamma|<\frac14\,,\,\,\text{then}\,\,B(1, \beta, \gamma)\ge \frac{\beta}{8}\,.
\end{equation}

Let $I:=[0,1]$. Given numbers $|f|<\la/4, \frac{F}{m}=\la$ it is enough to construct functions $\varphi, \psi, w$ on $I$ such that


Put $\varphi=-a$ on $I_{--}$, $=b$ on $I_{++}$, zero otherwise. And $w=1$ on $I_{--}\cup I_{++}$, and $w=Q$ otherwise. Then
put
$$
\psi:= (\varphi, h_{I_-}) h_{I_-} - (\varphi,  h_{I_+}) h_{I_+}\,.
$$
Let $0<a<b$ and $a$ is close to $b$. Put $\la=(a+b)/4$. Then average of $\varphi$ is small with respect to $\la$ and we can prescribe it.
$F=(a+b)/4, m=1$. On the other hand, function $\psi$ (which is a martingale transform of $\varphi-\langle\varphi\rangle$) is at least $-(\varphi, h_{I_+}) h_{I_+}\ge \frac12 b\ge \la$ on $I_{+-}$, whose $w$-measure is more than $\frac13 w(I)$. So
\begin{equation}
\label{4betagamma}
B(1, \beta,\gamma)\ge \frac13 \beta\,,
\end{equation}
for all small $\gamma$ and $\beta\asymp Q$. This is what we wanted to prove.

\section{Bellman function and the estimate of weighted weak norm from above in $A_1$ case}
\label{above}

Let us denote by $N_k$ the quantity  ($w\,\text{is constant on k-th generation and}\, w\in A_1^{dyadic}$)
$$
 N_k(V):=\sup\frac{1}{|I|}w\{x\in I: \sum_{J\subset I, J\in D, |J|\ge 2^{-k}|I|} \eps_J(f, h_J) h_J>\la\}\,.
$$

Then we have practically by the definition of $N_k$  (let $V$ temporarily denotes vector $(F,f, \la, w, m)$, and $y_1:=\la +f, y_2:=\la-f$)
\begin{equation}
\label{Nk}
N_{k+1} (V) \le \sup_{V_+, V_-, V=\frac{V_++V_-}{2}, |y_{1+}-y_{1-}|=|y_{2+}-y_{2-}|}\frac{N_k(V_-)+N_k(V_+)}{2}\,.
\end{equation}

In this language we need to prove that
\begin{equation}
\label{anykey}
N_{k}(V) \le B(V)\,\,\text{for any}\,\, k\,\,\text{and any}\,\, V \in \Omega_k\,.
\end{equation}

By bi-concavity of $B$ and by \eqref{Nk} we immediately see the induction step from $k$ to $k+1$. We are left to check that
\begin{equation}
\label{N0}
N_{0} (V) \le B(V)\,.
\end{equation}
Let us check \eqref{N0}. If $ \la > \frac{F}{m}\ge |f|$ we just use $B(V)\ge 0$ because for such parameters
$$
|(f, h_{[0,1]})|\le |f|<\la
$$
and the subset of $[0,1]$, where $\epsilon_{[0,1]}(f, h_{[0,1]})h_{[0,1]}(x)$ is greater than $\la$ is empty.

On the other hand, if $ \la \le \frac{F}{m}$, what can be the largest $w$-measure of $E\subset [0,1]$ on which  $\epsilon_{[0,1]}(f, h_{[0,1]})h_{[0,1]}(x)\ge \la$?
Here is the extremal situation: $w$ is $2Q-1$ on $[0, 1/2]$, and $1$ on $[1/2,1]$. Function $\varphi$ is zero on $[0, 1/2]$, and constant  $2f$ on $[1/2,1]$. Then $F=f, m=1$ (these are data on $[0,1]$). On the other hand,
$$
\epsilon_{[0,1]}(\varphi, h_{[0,1]})h_{[0,1]}(x)=\epsilon_{[0,1]}\,f\,h_{[0,1]}(x) =\frac{F}{m} \ge \la
$$
on the whole $[0,1/2]$ if $\epsilon_{[0,1]}=\pm$ is chosen in the right way. But in this case again, $B(V)\ge 2Q-1\ge w([0,1/2])$.

Hence, $B(V)\ge N_0(V)$ is proved, and we can start the induction procedure.

It is left to find our $ \B$ to have a sharp estimate from above in $A_1$ problem.

\section{Martingales}
\label{Mart}

We will use four-adic lattice $\F$.
For a four-adic interval $I$ let
$H_I= 1$ on its right half, $H_I =-1$ on its left half, let also
$G_I=1$ on its leftest and rightest quarters and $G_I =-1$ on two middle quarters. We will call  martingale difference  the
function of the type
$$
f_n=\sum_{I\in \F, \ell(I)=4^{-n}} a_I H_I
$$
or
$$
g_n=\sum_{I\in \F, \ell(I)=4^{-n}} b_I G_I\,,
$$
where $a_I, b_I$ are numbers.
Martingale for us is any function on $I_0:=[0,1]$ of the type
$\ff=f+\sum_{n=0}^N f_n\,,$ or $ \g=g+\sum_{n=0}^N g_n\,,$ where $f,g$ are two constants.
We distinguish $H$- and $G$-martingales.

In the previous sections the following theorem was proved.
\begin{thm}
\label{mart1}
Given $Q>1$ there exist three $H$-martingales  $\FF,\ff,\WW$, $\FF\ge 0, \WW\ge 0$, and one $G$-martingale $\g = g+\sum_{n=0}^N g_n$ with large positive $g$ such that the following holds:

1) For any $I\in \F$, $\La \FF\Ra_I \ge \La |\ff| \Ra_I\min _I\WW$.

2) For any $I\in \F$, $\La \WW\Ra_I \le Q\min_I\WW$.

3) For any $I\in \F$, $a_I=b_I$, where these are martingale differences coefficients  for $\ff$ and $\g$.

4) $g\cdot\int_{x\in I_0: \g(x) \le 0}\WW\, dx \ge c\, Q \log^{p}Q\,\int_{I_0}\FF\,dx$, $p<\frac15$.
\end{thm}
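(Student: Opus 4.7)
The plan is to convert the abstract lower bound of Theorem \ref{onethird} into concrete martingales realizing (1)--(4) by reading off the near-extremizers of the supremum that defines $\B$.

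First, invoke Theorem \ref{onethird}: $\hat{Q}\ge cQ\log^p Q$ for any $p<\tfrac{1}{5}$. Since $\B$ is defined (formula \eqref{B5}) as a supremum over weights $\om$, functions $\varphi$, and sign sequences $\eps_J$, we may, for each large $Q$, pick a finite-depth dyadic $A_1$ weight $w$ on $[0,1]$ with $[w]_{A_1}^{dyadic}\le Q$, a function $\varphi\in L^1(w\,dx)$, a threshold $\la>0$, and signs $\eps_J\in\{\pm 1\}$ (for $J\in D$ of depth $\le N$) such that the martingale transform $T\varphi=\sum_J\eps_J(\varphi,h_J)h_J$ satisfies
\[
w\{x\in[0,1]:\,T\varphi(x)>\la\}\;\ge\;c\,Q\log^p Q\,\frac{\int|\varphi|\,w\,dx}{\la}.
\]
Flipping all the signs $\eps_J$ if necessary, we may instead assume that the failure $T\varphi<-\la$ holds on a set $E$ of the same $w$-measure.

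Next, repackage this in the four-adic $H/G$ framework. The four-adic lattice distinguishes two kinds of dyadic scales: $H$-scales $2^{-(2n+1)}$ (halves of four-adic intervals, where $H_I$ lives) and $G$-scales $2^{-(2n+2)}$ (quarters of four-adic intervals, where $G_I$ lives). These strictly alternate, and $G_I=H_{I_+}-H_{I_-}$ (in the $\pm 1$ normalization used here) realizes a $1$-shift that moves a Haar coefficient down one dyadic level. The Bellman inequalities of Section \ref{a1}---bi-concavity (Theorem \ref{tudaweight}) and the obstacle condition \eqref{againobst}---depend only on the filtration structure and the combinatorics of two-split refinement, not on the specific Haar realization on each generation; hence the same lower bound $\hat{Q}\ge cQ\log^p Q$ applies when the $0$-shift $T$ is replaced by the $1$-shift $\ff\mapsto\sum_I a_I G_I$ with $a_I$ the $H$-coefficients of $\ff$. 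Pick such a near-extremizer $\ff$ (an $H$-martingale on the four-adic lattice) and a level $\la>0$; set $g:=\la$ and $\g:=g+\sum_I a_I G_I$ (after a sign flip if necessary, so that the failure set becomes $\{\g\le 0\}$). Then $\g$ is a $G$-martingale with $b_I=a_I$, giving (3).

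Finally, let $\WW$ be the weight $w$ viewed as an $H$-martingale (we may take $w$ constant on the finest $H$-cells without loss in the Bellman sup), so (2) is just $[w]_{A_1}^{dyadic}\le Q$. Define $\FF$ to be the $H$-martingale with $\langle\FF\rangle_I:=\langle|\ff|\WW\rangle_I$ on each four-adic $I$; then $\langle\FF\rangle_I\ge\langle|\ff|\rangle_I\min_I\WW$ yields (1), and $\int\FF=\int|\ff|\WW$. Condition (4) is the direct rewriting
\[
g\int_{\{\g\le 0\}}\WW\,dx\;=\;\la\,w(E)\;\ge\;c\,Q\log^p Q\int|\ff|\,w\,dx\;=\;c\,Q\log^p Q\int\FF\,dx.
\]
The main obstacle is the $0$-shift to $1$-shift transfer in the middle paragraph: the Bellman analysis of Section \ref{a1} is phrased for the $0$-shift, but condition (3) encodes a $1$-shift from $H_I$ to $G_I$. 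Because the bi-concavity and obstacle inequalities driving the blow-up depend on the filtration rather than the Haar realization, this is only a re-indexing and the logarithmic exponent $p<\tfrac{1}{5}$ is preserved.
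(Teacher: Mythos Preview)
Your proposal follows the paper's route exactly --- the paper's whole argument for Theorem~\ref{mart1} is the single sentence ``In the previous sections the following theorem was proved,'' and you are making explicit how Section~\ref{a1} is supposed to deliver the four-adic statement. You have also correctly located the one nontrivial point: condition (3) forces the passage from $\ff$ to $\g$ to be the Haar shift $\sum_I a_I H_I\mapsto\sum_I a_I G_I$, not the $0$-shift analysed in Section~\ref{a1}. Your resolution, however, is asserted rather than argued. What actually needs checking is that the Bellman function defined as in \eqref{B5} but for the four-adic $H\to G$ problem still satisfies the four-point concavity that drives Theorem~\ref{Bdiffform}. The mechanism is concrete: one four-adic refinement, with $\FF,\WW,\ff$ following the $H_I$-pattern and the threshold following the $G_I$-pattern on the four grandchildren, produces on those grandchildren all four sign combinations of $(df,d\la)$ with $(dF,dw)$ tied to $df$ --- exactly what one gets by applying \eqref{mi11} once and \eqref{mi21} once. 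Once this is spelled out, the obstacle condition \eqref{againobst} and the differential analysis of Section~\ref{a1} transfer; the phrase ``only a re-indexing'' hides precisely this verification.

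There is also a slip in your last paragraph: the averages $\langle|\ff|\WW\rangle_I$ over four-adic $I$ do not in general come from an $H$-martingale (nothing forces $\langle|\ff|\WW\rangle_{I_{+-}}=\langle|\ff|\WW\rangle_{I_{++}}$, even though $\ff$ and $\WW$ separately are $H$-martingales), so your definition of $\FF$ is inconsistent. The $H$-martingale $\FF$ should instead record the $F$-coordinate along the near-extremal Bellman trajectory, which by construction splits with the $H$-pattern at each four-adic step; its integral is the initial value $F=\int|\ff|\WW$, and that is all condition (4) requires.
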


\section{Controlled doubling martingales}
\label{DoubMart}

We are going to make a small modification in the proof to get the following

\begin{thm}
\label{mart2}
Given $Q>1$ there exist three $H$-martingales  $\FF,\ff,\WW$, $\FF\ge 0, \WW\ge 0$, and one $G$-martingale $\g=\sum_{n=0}^N g_n$  such that the following holds:

1) For any $I\in \F$, $\La \FF\Ra_I \ge \La |\ff| \Ra_I\min _I\WW$.

2) For any $I\in \F$, $\La \WW\Ra_I \le Q\min_I\WW$.

3) For $I\in \F$, $b_I=-a_I$, where these are martingale differences coefficients  for $\ff$ and $\g$.

4)For large positive number $g$,  $g\cdot\int_{x\in I_0: \g(x) \ge g}\WW\, dx \ge c\, Q \log^{p}Q\,\int_{I_0}\FF\,dx$, $p<\frac15$.

5) For any two four-adic neighbors (neighbors in the tree)  $I\in \F$ and  $\hat{I}$, $\La \WW\Ra_{\hat{I}}\le 4\La\WW\Ra_I$.
\end{thm}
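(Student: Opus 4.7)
Conclusions (1)--(4) reduce to Theorem \ref{mart1} by a sign reversal of $\g$. Given the $G$-martingale $\g^{old} = g + \sum_n g^{old}_n$ from Theorem \ref{mart1}, whose differences satisfy $b^{old}_I = a_I$, set $\g := g - \g^{old} = -\sum_n g^{old}_n$. This is a $G$-martingale with no constant term and with coefficients $b_I = -a_I$, giving (3). Because
\[
\{x \in I_0 : \g(x) \ge g\} \;=\; \{x \in I_0 : \g^{old}(x) \le 0\},
\]
item (4) of Theorem \ref{mart2} is literally item (4) of Theorem \ref{mart1}. Items (1) and (2) persist since $\FF, \ff, \WW$ are not touched.

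The new content is the controlled doubling (5). The plan is to refine the four-adic lattice by inserting \emph{silent} generations between the active scales of the construction underlying Theorem \ref{mart1}: on each silent generation the martingale differences of $\FF, \ff, \g$ are assigned to be zero, while $\WW$ is made to evolve gradually so that between any parent and child in the refined tree the ratio of averages is at most $4$. A single original jump of $\WW$ of multiplicative size $K \le Q$ between consecutive active scales is distributed over roughly $\log_4 K$ silent generations, on each of which $\WW$ is refined by a single $H$-martingale difference that raises it on one half of the parent while leaving the other half unchanged. Property (5) then holds on silent steps by design and on active steps because the refined $\WW$ no longer jumps there. Items (3) and (4) are undisturbed because the differences of $\ff, \g$ and the set $\{\g \ge g\}$ live only on the untouched active generations.

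The delicate point, and the main obstacle, is the verification of the dyadic $A_1$ bound $\La \WW\Ra_I \le Q \min_I \WW$ on every cube $I$ of the \emph{refined} lattice. Arranging each silent step so that $\WW$ is lifted on the non-minimal half and left fixed on the minimal half pins $\min_I \WW$ to its value in the original construction. The growth of $\La \WW\Ra_I$ across the inserted chain of silent generations is then a geometric sum controlled by the total multiplicative jump, which in turn is bounded by the original $A_1$ constant $Q$ of Theorem \ref{mart1}, up to an absolute factor that can be absorbed by rescaling $Q$ at the outset. With this bookkeeping in place, items (1), (2) and (5) follow from the refined construction, and items (3), (4) from the sign-flip above, completing the proof.
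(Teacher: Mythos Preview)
Your sign-flip reduction of (1)--(4) to Theorem \ref{mart1} is correct and is the natural passage between the two formulations.

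For (5), however, there is a real gap. An $H$-martingale difference $c_I H_I$ equals $+c_I$ on the right half of $I$ and $-c_I$ on the left half; it cannot ``raise $\WW$ on one half of the parent while leaving the other half unchanged.'' Both halves move by exactly the same amount in opposite directions, so your silent steps are not well-defined as described. Worse, the spreading-out scheme cannot be repaired within the $H$-martingale framework: once you are on, say, the right half with average $w+c_1$, any further $H$-difference there sends its two quarters to $w+c_1\pm c_2$; iterating produces a spread of values and never lands all leaves on the single target $w+c$ that the original child carried. There is no $H$-martingale mechanism for moving the average on a fixed half monotonically towards a prescribed value while controlling parent--child ratios. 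So post-processing a given extremizer by inserting generations does not work.

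The paper's ``small modification in the proof'' is meant at an earlier stage, inside the Bellman construction of Section \ref{a1}, not afterwards. The entire differential analysis (Theorems \ref{bigform}--\ref{xieta} and the estimates following them) is carried out with infinitesimal increments $dF,\,dw,\,d\la$; in particular $\xi=dw/w$ is a free small parameter. Equivalently, one restricts the supremum defining $\B$ in \eqref{B5} to weights satisfying the four-adic doubling bound (5). The concavity in Theorem \ref{tudaweight} then holds only for $|\gamma|\le \tfrac34 w$, but that is all the second-order Taylor expansion ever uses; the contradiction forcing $\hat Q\ge c\,Q\log^{p}Q$ goes through verbatim. Near-extremizers of this restricted Bellman function are martingales whose weight increment at every node satisfies $|c_I|\le \tfrac34\langle\WW\rangle_I$ by construction, so (5) is automatic. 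The doubling is baked into the construction from the start rather than retrofitted.
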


In other words, we can always control the four-adic doubling property of $\WW$.

\section{Remodeling by proliferation. The amplification of martingale differences}
\label{remod}

Now we are going to repeat the procedure from \cite{NV}. We say that $I_0$ supervises itself. Take a very large $n_1$, consider the division of of $I_0$ to $4^{n_1}$ small equal intervals and let the leftest quarter of the supervisor ($I_0$ it is) supervises the first, fifth, etc small subdivision interval of the supervisee (which is still $I_0$ for now, so these are intervals of our just done subdivision). Let the second quarter supervises the second, the sixth, etc; the third quarter supervises the third, the seventh, etc, and the fourth quarter supervises the fourth, the eighth, etc.

Now we have new pairs of (supervisor, supervisee). Subdivide each supervisor to its $4$ sons and its supervisee to $4^{n_2}$ sons, where $n_2>>n_1$. Repeat supervisor/supervisee allocation procedure
as before, Continue with the new pairs of (supervisor, supervisee). Repeat $N$ times.

Now, as in \cite{NV}, we are going to ``remodel" martingales $\WW, \FF, \ff, \g$ to new functions with basically the same distributions.

We first ``square sine" and ``square cosine" function for any supervisee interval $I$. Let
$$
sqsin_{I_0} (x) := H_{I_0}(4^{n_1} x)\,,\,\, sqcos_{I_0} (x)= G_{4^{n_1}I_0}( x)\,.
$$
Next supervisors will be the quarters of $I_0$. Take on of such quarter, say, $I$, and put
$$
sqsin_{I} (x) := H_{I}(4^{n_2} x)\,,\,\, sqcos_{I} (x)= G_{4^{n_2}I_0}( x)\,.
$$
We continue doing that for the next generation of supervisors.  Let $I, J$ be a supervisor/supervisee pair.
Now let $\ell_{IJ}$ be a natural linear map $J\rightarrow I$. We put $sqs_J:= sqsin_I \circ \ell_{IJ}$, $sqc_J= sqcos_I\circ \ell_{IJ}$.

Now we basically want to put
$$
W:= w+ \sum_{n=1}^N\sum_{I\in \F, \ell(I)=4^{-n}}\sum_{J\, supervised\, by \, I} c_I sqs_J\,,
$$
where $c_I$ are coefficients of $\WW$.
$$
\Phi:= F+ \sum_{n=1}^N\sum_{I\in \F, \ell(I)=4^{-n}}\sum_{J\, supervised\, by \, I} d_I sqs_J\,,
$$
where $d_I$ are coefficients of $\FF$.
$$
\phi:= f+ \sum_{n=1}^N\sum_{I\in \F, \ell(I)=4^{-n}}\sum_{J\, supervised\, by \, I} a_I sqs_J\,,
$$
where $a_I$ are coefficients of $\ff$.
$$
\rho:=  \sum_{n=1}^N\sum_{I\in \F, \ell(I)=4^{-n}}\sum_{J\, supervised\, by \, I} b_I sqc_J\,,
$$
where $b_I=-a_I$ are coefficients of $\g$.
Notice that the last formula has square cosines $sqc$, and this will be important.

We do exactly that, but to ensure the doubling property of $W$ we just for every pair $(I,J)$ (supervisor/supervisee) replace $sqs_J$ by basically the same function, but such that its first $4$ steps on the left are replaced by $0$ and its first $4$ steps on the right are replaced by zero. Call it $sqsm_J$. So
$$
W:= w+ \sum_{n=1}^N\sum_{I\in \F, \ell(I)=4^{-n}}\sum_{J\, supervised\, by \, I} c_I sqsm_J\,,
$$
where $c_I$ are coefficients of $\WW$.

The doubling property of such a $W$ has been checked in \cite{NV}.
We notice that if $1<<n_1<<n_2<<...<<n_N$ then the distribution functions of these new function are basically the same that for their model martingales. So we can repeat Theorem \ref{mart2}.
Let us consider the periodic extension of $W, \Phi, \phi, \rho$ to the whole line (or we could consider everything just on the unit circle identifying it with $[0,1)$).

\begin{thm}
\label{mart3}
Given $Q>1$ then the above functions $W, \Phi, \phi, \rho$  are such that the following holds:

1) For any $J\in \F$, $\La \Phi\Ra_J\ge \La |\phi| \Ra_J\min _J W$.

2) For any $J\in \F$, $\La W\Ra_J \le Q\min_JW$.

3) For any $J\in \F$, $b_J=-a_J$,  these are martingale differences coefficients  for $\phi$ and $\rho$.

4) For a large positive number $g$, $g\cdot\int_{x\in I_0: \rho(x) \ge g}W\, dx \ge c_0\, Q \log^{p}Q\,\int_{I_0}\Phi\,dx$, $p<\frac15$.

5) $W$ is doubling with an absolute constant.
\end{thm}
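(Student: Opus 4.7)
The plan is to verify the five properties by comparing the remodeled functions $W, \Phi, \phi, \rho$ to their martingale models $\WW, \FF, \ff, \g$ supplied by Theorem \ref{mart2}, and by choosing the scales $n_1 \ll n_2 \ll \cdots \ll n_N$ so aggressively that the averaging errors introduced at each stage are negligible.

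\textbf{Items (1)--(3).} I would first check the $A_1$-type pointwise relation (1) and the $A_1$ bound (2) by induction on the stage $k$ of the supervisor tree. On each four-adic interval $J$ of length $4^{-n_1-\cdots-n_k}$ that is supervised by some interval $I$ at level $k$, the oscillating pieces $c_I\, sqsm_{J'}$ coming from later stages ($J' \subset J$ supervised by descendants of $I$) average to essentially zero on $J$, because $sqsm_{J'}$ is obtained from a rescaled $H$-martingale difference whose mean vanishes. Consequently, $\langle W\rangle_J = \langle \WW\rangle_I$, $\langle \Phi\rangle_J = \langle \FW\rangle_I$ and $\langle \phi\rangle_J = \langle \ff\rangle_I$ up to errors of order $4^{-n_{k+1}}$; the same holds for $\min_J W$ because the amplitude of $W$ on any small sub-piece of $J$ matches $\min_I \WW$ up to the same small error. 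Items (1) and (2) then transfer directly from Theorem \ref{mart2}. For item (3), the matching $b_J = -a_J$ is preserved by the remodeling because we took the same coefficients $a_I$ for $\phi$ (against $sqs$) and $b_I = -a_I$ for $\rho$ (against $sqc$); the sign rule propagates down the supervisor tree by construction.

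\textbf{Item (5).} The doubling bound for $W$ is the reason we replaced $sqs_J$ by $sqsm_J$. For any two four-adic neighbors $J, \hat J$, the averages $\langle W\rangle_J$ and $\langle W\rangle_{\hat J}$ differ only through the contributions $c_I\,sqsm_{J'}$ with $J' \subset J$ supervised by an ancestor $I$. By flattening $sqs_{J'}$ to $sqsm_{J'}$ on the first and last four steps, we ensure that near any four-adic boundary the fast oscillations are killed, so no single generation can contribute a jump factor larger than a constant; this is precisely the construction used in \cite{NV}, and I would cite that verification verbatim.

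\textbf{Item (4).} This is the heart of the argument and the step I expect to be the main obstacle. The claim is that the level set $\{x : \rho(x) \ge g\}$ carries $W$-measure at least $cQ\log^p Q\,\int \Phi$. The key observation is the sine/cosine dichotomy: on each supervisee $J$ of a supervisor $I$, the function $sqc_{J}$ takes the value $+1$ on the two outer quarters and $-1$ on the two inner quarters of a sub-interval selected by the linear map $\ell_{IJ}$, exactly mimicking the support structure of $G_I$. Because $b_I = -a_I$, the sign pattern of $\rho$ on a quarter of $J$ reproduces (up to a controlled sign) the sign pattern of $\g$ on the corresponding quarter of $I$. Summing across all stages and using that $W$ on each supervisee is, to leading order, the average $\langle \WW\rangle_I$ of the supervisor, I obtain that $\int_{\rho \ge g} W\,dx$ equals $\int_{\g \ge g} \WW\,dx$ up to a multiplicative factor $1 + O(4^{-n_1})$. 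Similarly $\int \Phi = \int \FF$ up to the same factor. Choosing $n_1$ large enough to absorb all such errors, Theorem \ref{mart2}, item (4), yields item (4) for the remodeled functions. The delicate point, and the one that needs the most care, is showing that the $sqc$-oscillations at deeper levels do not contaminate the level set $\{\rho \ge g\}$ at the first level: this requires the separation $n_{k+1} \gg n_k$ so that inner oscillations are small compared to the jumps introduced by the outer ones, which is standard in the remodeling scheme of \cite{NV}.
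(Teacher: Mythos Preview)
Your proposal is correct and follows essentially the paper's own approach: the paper dispatches items (1)--(4) in one sentence by noting that for $1\ll n_1\ll n_2\ll\cdots\ll n_N$ the remodeled functions have ``basically the same'' distribution functions as the model martingales of Theorem~\ref{mart2}, and cites \cite{NV} for the doubling in item (5). Your write-up simply unpacks that sentence; two small clean-ups: $\langle\FW\rangle_I$ is a typo for $\langle\FF\rangle_I$, and the phrase ``up to a controlled sign'' in item (4) is unnecessary---the supervisor/supervisee correspondence gives $\rho(y)=\g(x)$ exactly, not merely up to sign.
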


Now what happens with the Hilbert transform $H\phi$ of $\phi$? It is immediate that if we extend periodically $scsin_{I_0}$ to the real line and do the same with $sqcos_{I_0}$ and call them $sqsin$, $sqcos$, then
\begin{equation}
\label{xi}
H(sqsin)(x)=\xi(x)\, sqcos(x)\,,
\end{equation}
where $\xi$ is a non-negative $1$-periodic function 
that looks as follows. It is logarithmically goes to $+\infty$ at $0$, at $\frac12 -$, at $\frac12 +$ and at $1$.
It has two zeros: at $\frac14$ and at $\frac34$. Continue it $1$-periodically
Let $I$ be one of the supervisors of $k$-th generation. Put 
$$
\xi_I(x):= \xi(4^{n_k}x), \, x\in I\,.
$$
Let now $I,J$ is the the pair of supervisor/supervisee.
Recall that $\ell_{IJ}$ is the linear map from $J$ to $I$ sending the left (right) end-point to the left (right) end-point.
We put
$$
\xi_J := \xi_I\circ \ell_{IJ}.
$$

It is now tempting (looking at the definition of $\phi$) to write that (recall that $b_I=-a_I$)

$$
H\phi (x)=  \sum_{n=1}^N\sum_{I\in \F, \ell(I)=4^{-n}}\sum_{J\, supervised\, by \, I} b_I \xi_J(x) sqc_J(x)\,.
$$

Unfortunately, in \eqref{xi} we have $1$-periodic  $sqsin, sqcos$ and not their localized to $I_0$'s versions. But $H$ of any bounded highly oscillating function on interval $J$ goes to zero
uniformly outside the neighborhood of the end-points of $J$. Therefore we can make up for the problem with localized to $J$ functions $sqs_J, sqc_J$ by writing
\begin{equation}
\label{Hphi}
H\phi (x)=  \sum_{n=1}^N\sum_{I\in \F, \ell(I)=4^{-n}}\sum_{J\, supervised\, by \, I} b_I \xi_J(x) sqc_J(x)+\Theta(x)\,,
\end{equation}
where $\Theta(x)$ is as close to zero as we wish on  a set of as small Lebesgue measure as we wish--by the choice of largeness of $n_1<<n_2<<\dots$.

We can think that in all our constructions all sums are finite. In particular, the coefficients $a_I, b_I=-a_I$ of $\ff, \g$ can be thought to be zero after a while. So let $m_0$ be the last generation where we have these coefficients non-zero. Then the set
\begin{equation}
\label{distrFom}
\omega:=\{x\in [0,1]:\, \g(x) \ge g\}
\end{equation}
consists of the collection of the whole intervals of the next generation $m_0+1$, that is consists of the certain sons of certain collection of $4$-adic intervals of generation $m_0$ whom we will  call $\mathcal{\hat I}$. The set of their sons forming $\omega$ will be called $\mathcal{I}$.
Intervals $\hat I$ from $\mathcal{\hat I}$ are the last intervals that are supervisee in the remodeling construction above. Their supervised intervals will be called collection $\mathcal{\hat J}$. Let $I\in \mathcal{\hat I}$, $J\in \mathcal{\hat I}$ are supervisee/supervised pair. We do remodeling last time: divide $I$ to its sons, divide $J$ to $4^{n_{m_0}}$ equal intervals, make correspondence between the sons of $I$ and some small intervals of this subdivision of $J$. Let the son of $I$ happen to be in $\mathcal{ I}$, then we mark correspondent  small intervals  $J'$ of this subdivision of $J$ by red.
All red intervals will be called collection $\mathcal{J}$. Call it $\Omega:= \cup_{J'\in\mathcal{J}} J'$.

Now we can see that
\begin{equation}
\label{distrF}
\Omega = \cup_{J'\in\mathcal{J'}} J=\{x\in [0,1]:\, \rho(x) \ge g\}.
\end{equation}

In fact, let $I'$ be an element of $\mathcal{I}$, and $J'$ be a corresponding red interval (from $\mathcal{J}$). Fix any point $x\in I$ and consider
$\g=\sum_{I} b_I G_I$ at $x$. Consider the sequence of the terms of this sum.  Here the sum has a term  $b_{\hat I'} G_{\hat{I'}}(x)$ from father $\hat I'$ of $I'$, then a term from a grandfather, et cetera.
And for any $x\in I'$ this sequence we just described is the same.
If we consider now any $y\in J'$ and consider the sequence of terms in the sum $\rho(y)=\sum_{J:\, J \,supervised\, by\, I} b_I sqc_J(y)$ we will see that it is exactly the same sequence as for $x\in I'$. This was done by remodeling construction because each $J''$ that gives the contribution to the sum at $y$ has a supervisor $I''$ that gives {\it the same} contribution to the sum at $x$. This proves \eqref{distrF}. This proves that $\g$ and $\rho$ are distributed in the same way
(with respect to Lebesgue measure, and also with respect to pair $\WW$, $W$ correspondingly).

However, we need a subtler thing. The distribution of $\rho$ is not enough for us, we need also the distribution of 
$$
\tilde H\phi (y) :=  \sum_{n=1}^N\sum_{I\in \F, \ell(I)=4^{-n}}\sum_{J\, supervised\, by \, I} b_I \xi_J(y) sqc_J(y)\,.
$$

The problem is of course that we have all these $\xi_{J}(y)$. In fact, if $I'\in \mathcal{I}$, the for any   red interval $J'$ supervised by $I'$ (and any point $y$ in any such $J'$) we have one and the same sequence of numbers $\{b_I sqc_J(y)\}_{I'\subset I, \, J\, is\, supervised\, by \, I}$.

Call this sequence of numbers $d(x)$. It is a finite sequence $\{d_1,\dots, d_{m_0}\}$ and if $x\in I'\subset \omega$, then (see \eqref{distrFom})
$$
d_1+\dots d_{m_0} =: g_1\ge g\,.
$$
We normalize by $\theta_i:= d_i/g_1$.  Then in the corresponding $y\in J'$ we have the sum for $\tilde H\phi (y)$ which looks like 
$$
\sum_{i=1}^{m_0} \theta_i x_i(y),
$$
where $x_i(y)$ is a corresponding $sqc_J(y)$, for example $x_1(y)= sqc_J(y)$, where $J$ is a father of $J'$ and a supervisee of a father $I$ of $I'$. 

We have to notice that the sequence $d(x)$ does not depend on $x$, depends only on $I'\in\mathcal{I}$, and hence the sequence
$\{\theta_1,\dots, \theta_{m_0}\}$ does not depend on $y$ as long as $y\in J'$, and $J'$ is a red interval corresponding to $I'$.

But unfortunately $\xi_i(y)$ depend on $y$ very much. In different red intervals $J_1', J_2',\dots$ corresponding to  the same $I'$ the sequence $\{x_1,\dots, x_{m_0}\}$   is completely different.

Fix our $I'\in \mathcal{I}$, let $\mathcal{J}(I')$ be  the union of all red $J'$ corresponding to $I'$. On $Y:=\cup_{J'\in \mathcal{J}(I')} J'$ we introduce the probability measure as follows: choose any such $J'$ with equal probability $\PP'$, and then put a normalized Lebesgue measure on it.

Notice that the joint distribution of $\{x_1(y),\dots, x_{m_0}(y)$\}, $y\in Y$, with respect to this $\PP'$ is almost the same as the joint distribution of independent random variables $\{\xi_1,\dots \xi_{m_0}\}$ having the same distribution of our function $\xi$ on $[0,1]$. We can make closeness in joint distribution apparent by choosing very large $n_1<<n_2<<\dots$.

Consider now two cases: 1) $\sum \theta_k^2 < c_0$, 2)  $\sum \theta_k^2 \ge c_0$, where $c_0$ is a certain absolute constant.

Let $\xi = \sum\theta_k \xi_k, \zeta_k= \xi_k-\E\xi_k, \zeta= \sum\theta_k\zeta_k$. Let us think that $\int\xi=1$
Notice that then by normalization of $\theta_i$ we have  $\E(\sum\xi_k) = 1$.

Case 1).  $\PP\{ \xi<1/2\} = \PP\{|\xi-1|>1/2\} \le 4Var(\zeta) \le 4 c_0$. So if $c_0$ is happened to be $=1/8$ we get that
$$
\PP\{ \xi\ge 1/2\} \ge 1/2\,.
$$ 
Then by the closeness in joint distribution we would conclude that 
\begin{equation}
\label{sumcoef1}
\PP'\{\sum_{i=1}^{m_0} \theta_i x_i(y)>1/2 \} \ge 1/4\,.
\end{equation}

\bigskip

Case 2). In This case the sum of variations of $\theta_k\xi_k$ is sufficiently large.
Now we will use then the following lemma:
\begin{lm}\label{le83}
Let $\theta_k>0,\ k=0,\dots,m-1$. Let $\tilde\xi_k$ be $\R$-valued independent random variables with variation $\theta_k$ satisfying
\begin{equation}
\label{moments}
\E|\tilde\xi_k|^p\le Cp\,\theta_k^{p/2}, \, p=3, 4,\dots
\end{equation}
Then there exists $\delta=\delta(C,c)>0$ such that
\begin{equation}
\label{sumcoef2}
\PP\bigg\{\bigg|\sum_{k=0}^{m-1}\tilde\xi_k+a\bigg|\ge\delta\bigg(\sum_{k=0}^{m-1}\theta_k^2\bigg)^{1/2}\bigg\}\ge\delta
\ \text{ for all }\ a\in\R.
\end{equation}
\end{lm}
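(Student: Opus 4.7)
I would derive Lemma~\ref{le83} from the second-moment (Paley--Zygmund) inequality applied to $T^2$, where $T := S + a$ and $S := \sum_{k=0}^{m-1}\tilde\xi_k$. Since the conclusion is required uniformly in $a\in\R$, first replace each $\tilde\xi_k$ by $\tilde\xi_k - \E\tilde\xi_k$ and absorb the shift $\sum_k \E\tilde\xi_k$ into $a$; from here on assume $\E\tilde\xi_k = 0$. Write $\sigma^2 := \sum_k \theta_k^2$. The target is $\bP(|T| \ge \delta\sigma) \ge \delta$.

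The plan reduces to two quantitative estimates: \emph{(i)} the lower bound $\E T^2 \ge c_1 \sigma^2$, and \emph{(ii)} the upper bound $\E T^4 \le C_2 (\E T^2)^2$, with constants depending only on $C$. Estimate \emph{(i)} is immediate from $\E T^2 = a^2 + \sum_k \E\tilde\xi_k^2$ together with the lower bound on $\E\tilde\xi_k^2$ in terms of $\theta_k^2$ built into the meaning of ``variation~$\theta_k$''. For \emph{(ii)}, I would expand $\E T^4 = \E S^4 + 4a\,\E S^3 + 6a^2\,\E S^2 + a^4$; by independence and centering the cross-terms in $\E S^4$ collapse and
\[
\E S^4 \;=\; \sum_k \E\tilde\xi_k^4 \;+\; 3\sum_{k\ne\ell} \E\tilde\xi_k^2\,\E\tilde\xi_\ell^2,
\]
which via \eqref{moments} at $p=2,4$ is dominated by a constant multiple of $(\E S^2)^2+\sigma^4$. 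The mixed term $4a\,\E S^3$ is handled by AM--GM, $|4a\,\E S^3|\le 2a^2\,\E S^2+2(\E S^3)^2/\E S^2$, while $|\E S^3|\le \sum_k |\E\tilde\xi_k^3|\le C_3\sum_k\theta_k^{3/2}$ is controlled by Cauchy--Schwarz in terms of $(\E S^2)^{1/2}\sigma$, so the correction is absorbed into $6a^2\,\E S^2$ and $\E S^4$.

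Combining \emph{(i)} and \emph{(ii)}, Paley--Zygmund applied to the nonnegative variable $T^2$ yields
\[
\bP\Bigl(T^2 \ge \tfrac12 \E T^2\Bigr) \;\ge\; \frac{(\E T^2)^2}{4\,\E T^4} \;\ge\; \frac{1}{4C_2} \;=:\; \delta_0,
\]
and on this event $|T|\ge (c_1/2)^{1/2}\sigma$. Choosing $\delta:=\min\bigl(\delta_0,(c_1/2)^{1/2}\bigr)$ finishes the proof.

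The hard part is the uniformity in $a$: the mixed-sign term $4a\,\E S^3$ could, in principle, spoil the ratio $(\E T^2)^2/\E T^4$. The cleanest way to neutralize it is to split by the relative size of $|a|$ and $\sigma$. In the regime $a^2\gg \sigma^2$, the $a^4$ piece dominates both $\E T^4$ and $(\E T^2)^2$, so the ratio is bounded below by an absolute constant. In the regime $a^2\lesssim \sigma^2$, the problem reduces to the $a=0$ case, where $\E S^4\le C_2'(\E S^2)^2$ is immediate from \eqref{moments}. In either regime the Paley--Zygmund ratio stays bounded below by a constant depending only on $C$, yielding the asserted $\delta=\delta(C,c)>0$.
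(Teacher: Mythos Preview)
Your approach is correct and takes a genuinely different route from the paper. The paper argues via characteristic functions: writing $\sigma = S + a$ and $\zeta_k = \tilde\xi_k - \E\tilde\xi_k$, it factors $|\E e^{i\la\sigma}| = \prod_k |\E e^{i\la\zeta_k}|$, Taylor-expands each factor to second order with a third-moment remainder to obtain $|\E e^{i\la\sigma}| \le \exp\bigl(-c\la^2\sum\theta_k^2 + C\la^3\sum\theta_k^3\bigr)$, and then chooses $\la \sim (\sum\theta_k^2)^{-1/2}$ to force $|\E e^{i\la\sigma}|$ strictly below $1$. The conclusion follows from the elementary bound $|\E e^{i\la\sigma} - 1| \le \la\delta(\sum\theta_k^2)^{1/2} + 2\,\PP\{|\sigma| > \delta(\sum\theta_k^2)^{1/2}\}$.

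You instead run Paley--Zygmund on $T^2$, which is more elementary (real moments only, no Fourier) and makes transparent that what drives the estimate is control of the ratio $\E T^4/(\E T^2)^2$. The trade-off is the uniformity in $a$: in the characteristic-function proof the shift disappears for free because $|e^{i\la a}| = 1$, whereas in your argument the cross-term $4a\,\E S^3$ must be absorbed explicitly into the fourth-moment bound---which you handle correctly (and your case-split in $|a|$ versus $\sigma$ is in fact unnecessary once you observe $\E T^4 \lesssim (a^2+\sigma^2)^2 \lesssim (\E T^2)^2$ directly). Both arguments use the hypothesis \eqref{moments} only for small $p$ (yours at $p=3,4$; the paper's at $p=3$ for the Taylor remainder), and both rely on the same lower variance bound implicit in ``variation~$\theta_k$''.
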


\bigskip

\noindent{\bf Remark.} Notice that function $\xi\in BMO$, so by John--Nirenberg inequality the requirement \eqref{moments} hold for our 
$$
\tilde\xi_k := \theta_k\xi_k.
$$
We will apply this Lemma to such $\tilde\xi_k$ and to $a=0$. Notice that our $\tilde\xi_k$ will be non-negative.

\bigskip

\begin{proof}
Denote
$$
\s=\sum_{k=0}^{m-1}\tilde\xi_k+a,\quad \zeta_k=\tilde\xi_k-\E\tilde\xi_k.
$$

Take $\la>0$ and consider
$$
|\E e^{i\la\s}|=\bigg|e^{i\la a}\prod_{k=0}^{m-1}\E e^{i\la\tilde\xi_k}\bigg|\\
=\prod_{k=0}^{m-1}|\E e^{i\la\tilde\xi_k}|=\prod_{k=0}^{m-1}|\E e^{i\la\zeta_k}|.
$$
Note now that for $\la\le \theta_k^{-1}$ (our $\la$ below will be such) we have by \eqref{moments}
$$
|\mathscr{E}e^{i\la\zeta_k}|=\bigg|1-\frac{\la^2}2Var\xi_k+O(\la^3\theta_k^3)\bigg|\le
\exp\bigg(-\frac{\la^2}2Var\xi_k+C\la^3\theta_k^3\bigg),
$$
and
$$\aligned
\prod_{k=0}^{m-1}|\mathscr{E}e^{i\la\zeta_k}|&\le
\exp\left(-c\la^2\sum\theta_k^2+C\la^3\sum\theta_k^3\right)\\
&\le\exp\bigg(-c'\left[\la\left(\sum\theta_k^2\right)^{1/2}\right]^2+
C'\left[\la\left(\sum\theta_k^2\right)^{1/2}\right]^3\bigg).
\endaligned$$
Now choose
$$
\la=\frac{c'}{2C'}\left(\sum\theta_k^2\right)^{-1/2}.
$$
Then
$$
|\mathscr{E}e^{i\la\s}|\le\exp\bigg(-\frac{(c')^3}{8(C')^2}\bigg).
$$
On the other hand, for every $\delta>0$, one has
$$
\begin{aligned}
|\E e^{i\la\s}-1|&\le\la\delta\left(\sum\theta_k^2\right)^{1/2}+
2\PP\bigg\{|\s|>\delta\left(\sum\theta_k^2\right)^{1/2}\bigg\}\\
&\le\frac{c'}{2C'}\delta+2\PP\bigg\{|\s|>\delta\left(\sum\theta_k^2\right)^{1/2}\bigg\}.
\end{aligned}
$$
Hence,
$$
\PP\bigg\{|\s|>\delta\left(\sum\theta_k^2\right)^{1/2}\bigg\}\ge
\frac12\bigg[1-\exp\bigg(-\frac{(c')^3}{8(C')^2}\bigg)-\frac{c'}{2C'}\delta\bigg]>\delta,
$$
if $\delta$ is chosen small enough.
\end{proof}

The terms of the sum $\sum_i\theta_i x_i(y)$ are almost constant functions on each red interval $J'\in \mathcal{I'}$. 
We already proved in \eqref{sumcoef1}, \eqref{sumcoef2} that probability $\PP$ 
of the sum $\sum_i\theta_k\xi_k$ is larger than certain fixed  absolute $\delta$ is at least $\delta$. 
Therefore we may think that at least $\delta/2$ portion of red intervals $J'\in \mathcal{I'}$ are such that for  the sum $\sum_i\theta_i x_i$ we have

 $$
 \min_{J'}\sum_i\theta_i x_i\ge \delta/2.
 $$
 
 Denote this collection of $J'$ by symbol $\mathcal{C}(I')$.
 
 Now the previous inequality   translates  into
 $$
 \tilde H\phi(x) \ge \frac{\delta}{4} g,
 $$
 on all $J'$ from the portions $\mathcal{C}(I')$ described above for all intervals $I'\in \mathcal{I}$.

 Now use 4) of Theorem \ref{mart3}. The estimate in 4) $\rho(x)\ge g$ holds on {\it all} red $J'$ corresponding to any $I'\in \mathcal{I}$ (see \eqref{distrF}). The $W$-measure of the union of them is large as indicated in 4), namely, $\ge \frac{c}{g} Q\log ^p Q\int |\phi(x) W(x)\, dx$.
 
  Notice that all red intervals $J'$ from $\mathcal{I'}$ have the same $W$ measure (by construction of $W$). Therefore, the $W$-measure of all these portions of red intervals described above (portions are enumerated by $I'\in \mathcal{I}$)  is at least $\delta/4$ times $c \,Q\log ^pQ \int |\phi(x)| W(x)\, dx$. So on such $W$-measure we have $ \tilde H\phi(x) \ge \frac{\delta}{4} g$.  This is exactly what we need if we take into consideration that $\Theta(x)$ in \eqref{Hphi} can be taken as small as we wish outside the set of Lebesgue measure (and then obviously of $Wdx$ measure as well) as small as we wish.

\end{document}